\newtheorem{thm}{Theorem}
\newtheorem{theorem}{Theorem}[section]
\newtheorem{lemma}[theorem]{Lemma}
\theoremstyle{definition}
\newtheorem{problem}{Problem}
\newtheorem{note}{Note}
\theoremstyle{remark}
\numberwithin{equation}{section}
\DeclareMathOperator{\RE}{Re}
\newcommand{\ID}{\mathbb{D}}
\newcommand{\IR}{{\mathbb R}}
\newcommand{\ds}{\displaystyle}
\begin{document}
\thispagestyle{empty} \setcounter{page}{1}


\title[Univalent harmonic right half-plane mappings]
{Univalency of convolutions of univalent harmonic right half-plane mappings}
\author[Z. Liu]{Zhihong Liu }
\address{Z.Liu, School of Mathematics and Econometrics, Hunan University, Changsha 410082, Hunan, People's Republic of China.
\vskip.03in College of Mathematics, Honghe University, Mengzi 661199, Yunnan, People's Republic of China.}
\email{\textcolor[rgb]{0.00,0.00,0.84}{liuzhihongmath@163.com}}

\author[S. Ponnusamy]{Saminathan Ponnusamy}
\address{S. Ponnusamy, Indian Statistical Institute (ISI), Chennai Centre,
SETS (Society for Electronic Transactions and Security), MGR Knowledge City, CIT Campus,
Taramani, Chennai 600 113, India.}
\email{\textcolor[rgb]{0.00,0.00,0.84}{samy@isichennai.res.in, samy@iitm.ac.in}}


\subjclass[2010]{31A05, 30C45}

\keywords{Convolution, univalent harmonic mappings, right half-plane mappings, convex in the horizontal direction.}

\begin{abstract}
We consider the convolution of half-plane harmonic mappings with respective dilatations $(z+a)/(1+az)$ and $e^{i\theta}z^{n}$, where $-1<a<1$ and $\theta\in\mathbb{R},n\in\mathbb{N}$. We prove that such convolutions are locally univalent for $n=1$, which solves an open problem of
Dorff et. al (see \cite[Problem~3.26]{Bshouty2010}). Moreover, we provide some numerical computations to illustrate that such convolutions are not
univalent for $n\geq 2$.
\end{abstract}
\maketitle

\section{Introduction and Preliminaries}
The subject of this study is the convolution of functions in the class  $\mathcal{H}$ of all complex-valued harmonic mappings $f=h+\overline{g}$
defined on the open unit disk $\mathbb{D}=\{z\in\mathbb{C}:\,|z|<1\}$ normalized by the condition $h(0)=g(0)=h'(0)-1=0$,
where $h$ and $g$ are analytic in $\mathbb{D}$. For two such harmonic mappings $f$ and $F=H+\overline{G}$ in $\mathcal{H}$ with power
series of the form
$$f(z)
=z+\sum_{n=2}^{\infty}a_{n}z^{n}+\sum_{n=1}^{\infty}\overline{b_{n}}\overline{z}^{n} ~\mbox{ and }~
F(z)
=z+\sum_{n=2}^{\infty}A_{n}z^{n}+\sum_{n=1}^{\infty}\overline{B_{n}}\overline{z}^{n},
$$
we define the harmonic convolution (or Hadamard product) as follows:
$$f*F=h*H+\overline{g*G}=z+\sum_{n=2}^{\infty}a_{n}A_{n}z^{n}+\sum_{n=1}^{\infty}\overline{b_{n}B_{n}}\overline{z}^{n}.$$
By Lewy's theorem~\cite{Lewy1936}, $f\in \mathcal{H}$ is locally univalent and sense-preserving if and only if $J_{f}(z)>0$ in $\mathbb{D}$,
where $J_{f}(z)= |h'(z)|^{2}-|g'(z)|^{2}$ denotes the Jacobian of $f$. The condition $J_{f}(z)> 0$ is equivalent to the existence of an analytic function
$\omega$, called the \textit{dilatation} of $f$, given by $\omega(z)=g'(z)/h'(z)$ with $|\omega(z)| < 1$ for all $z\in\mathbb{D}$, where
$h'(z)\neq 0$ in $\ID$. Denote by $\mathcal{S}_{H}$ the class of all sense-preserving harmonic univalent mappings $f=h+\overline{g}\in\mathcal{H}$ and by $\mathcal{S}_{H}^{0}$ the subclass of mappings $f\in\mathcal{S}_{H}$ such that $f_{\overline{z}}(0)=0$. For many basic results about several geometric subclasses of  $\mathcal{S}_{H}$ and $\mathcal{S}_{H}^{0}$, we refer to the
article of Clunie and Sheil-Small~\cite{Clunie} (or see~\cite{Dorff2012B,Duren, SaRa2013}).
Denote by $\mathcal{K}_{H}^{0}$, the class of functions in $\mathcal{S}_{H}^{0}$ which have convex images, and functions in
$\mathcal{K}_{H}^{0}$ are called convex.

Unlike the case of analytic mappings, the properties of harmonic convolutions are not so regular in the sense that
the convolution of two convex harmonic mappings is not necessarily even locally univalent in $\ID$. However, these convolutions
do exhibit some fascinating properties. In recent years,  properties of convolutions
of harmonic mappings were investigated by a number of authors, see for
example~\cite{Dorff2001,Dorff2012,Liu1,Li1,Li2,
Wang2016} and the references therein.
In~\cite{Muhanna1987,Dorff1999} and~\cite{Hengartner1987}, explicit descriptions of half-plane mappings and
strip mappings are given.

Recall that a domain $\Omega\subset \mathbb{C}$ is said to be convex in the horizontal direction~(CHD) if the intersection of $\Omega$ with each horizontal line is connected (or empty). We now recall one of the fundamental results, called \textit{shearing theorem}, due to Clunie and Sheil-Small~\cite{Clunie}.

\begin{thm}\label{thmA}
Let $f=h+\overline{g}$ be harmonic and locally univalent in $\mathbb{D}$. Then $f$ is univalent and its range is {\rm CHD} if and only if $h-g$ is univalent and its range is {\rm CHD}.
\end{thm}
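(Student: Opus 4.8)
The plan is to exploit the fact that $f=h+\overline{g}$ and the analytic ``shear'' $\phi:=h-g$ share the same imaginary part, so that they have identical level curves for $\operatorname{Im}$, and then to show that $\operatorname{Re} f$ and $\operatorname{Re}\phi$ move monotonically in tandem along each such curve. First I would record the elementary identity
\[
\operatorname{Im} f = \operatorname{Im} h - \operatorname{Im} g = \operatorname{Im}(h-g) = \operatorname{Im}\phi,
\]
valid throughout $\mathbb{D}$. Consequently, for every $c\in\mathbb{R}$ the level set $\gamma_c=\{z\in\mathbb{D}:\operatorname{Im}\phi(z)=c\}$ coincides with $\{z:\operatorname{Im} f(z)=c\}$, and the preimages under $f$ and under $\phi$ of any horizontal line agree. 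Since $f$ is locally univalent --- and we may assume it sense-preserving, the sense-reversing case being entirely analogous --- Lewy's theorem gives $|\omega|<1$ for $\omega=g'/h'$, whence $\phi'=h'(1-\omega)\neq 0$; thus $\phi$ is locally univalent and the curves $\gamma_c$ are regular analytic arcs carrying no critical points.

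Next I would transfer monotonicity from $\phi$ to $f$ along a fixed $\gamma_c$. Parametrizing the curve so that $\operatorname{Im}\phi$ stays equal to $c$, the differential $d\phi=\phi'\,dz$ is real along $\gamma_c$ and equals $d(\operatorname{Re}\phi)$. Writing $df=h'\,dz+\overline{g'\,dz}$ and substituting $dz=d(\operatorname{Re}\phi)/\phi'$ yields
\[
d(\operatorname{Re} f)=\operatorname{Re}\!\left(\frac{h'+g'}{h'-g'}\right)d(\operatorname{Re}\phi)=\operatorname{Re}\!\left(\frac{1+\omega}{1-\omega}\right)d(\operatorname{Re}\phi).
\]
Because $|\omega|<1$, the Möbius map $w\mapsto(1+w)/(1-w)$ sends $\mathbb{D}$ into the right half-plane, so $\operatorname{Re}\bigl((1+\omega)/(1-\omega)\bigr)>0$. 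Hence $\operatorname{Re} f$ and $\operatorname{Re}\phi$ are simultaneously strictly monotone, and in the same direction, as one moves along $\gamma_c$. This computation is reversible, which is what will deliver both implications at once.

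Finally I would assemble the local picture into a global one. The hypothesis that $\phi$ is univalent with CHD range is equivalent to saying that the arcs $\gamma_c$ foliate $\mathbb{D}$ and that $\phi$ maps each $\gamma_c$ injectively and monotonically onto the horizontal segment $\phi(\mathbb{D})\cap\{\operatorname{Im} w=c\}$. By the previous step, $\operatorname{Re} f$ is then strictly monotone along each $\gamma_c$ while $\operatorname{Im} f\equiv c$ there, so $f$ maps $\gamma_c$ injectively onto a horizontal segment; since distinct values of $c$ give distinct horizontal lines, $f$ is globally injective and its range meets every horizontal line in a single interval, that is, $f$ is univalent with CHD range. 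The converse follows verbatim by exchanging the roles of $f$ and $\phi$ in the reversible identity above. I expect the main obstacle to be this assembly step rather than the computation: one must justify carefully that univalence together with the CHD property really does force the level curves to be crosscuts that sweep out the disk monotonically, and that gluing the per-curve injectivity produces genuine global injectivity of $f$ (in particular ruling out that two points on different level curves could share an image). This is exactly where the connectedness built into the CHD definition, rather than mere local univalence, is essential.
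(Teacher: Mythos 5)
The paper offers no proof of this statement at all---it is quoted as Theorem~\ref{thmA} directly from Clunie and Sheil-Small \cite{Clunie}---but your argument is precisely the classical shearing proof from that source: the identity $\operatorname{Im} f=\operatorname{Im}(h-g)$, nonvanishing of $(h-g)'=h'(1-\omega)$ via Lewy's theorem, and transfer of strict monotonicity of the real part along the common level curves through the positive factor $\operatorname{Re}\bigl((1+\omega)/(1-\omega)\bigr)$. The proof is correct, and you have rightly isolated the only delicate point, namely that univalence together with the CHD property of whichever of $f$, $h-g$ is assumed forces each level set $\{\operatorname{Im}=c\}$ to be a single regular arc (its preimage of a connected horizontal cross-section under a homeomorphism), so that the per-curve injectivity and the trivial separation of distinct $c$-values globalize to univalence and CHD of the other function.
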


A function  $f=h+\overline{g} \in {\mathcal S}_H$ is said to be a slanted half-plane mapping with $\gamma$ ($0\leq\gamma<2\pi$) if $f$
maps $\ID$ onto  $H_\gamma :=\{w:\,{\rm Re\,}(e^{i\gamma}w) >-(1+a)/2\}$, where $-1<a<1$.
Using the shearing method due to Clunie and Sheil-Small \cite{Clunie}  and the Riemann mapping theorem,
it is easy to see that such a mapping has the form
\begin{equation}\label{li2-eq3}
h(z)+e^{-2i\gamma}g(z)=\frac{(1+a)z}{1-e^{i\gamma}z}.
\end{equation}
Note that $h(0)=g(0)=h'(0)-1=0$ and $g'(0)=a$.
The class of all slanted half-plane mappings with $\gamma$ is denoted by ${\mathcal S}(H_{\gamma})$. Clearly,
each $f\in {\mathcal S}(H_{\gamma})$ obviously belongs to the convex family $\mathcal{K}_{H}$ but not necessarily in $\mathcal{K}_{H}^{0}$
unless $a=0$. It is evident that there are infinitely many \textit{slanted half-plane mappings} with a fixed $\gamma$.
We denote by $\mathcal{S}^{0}(H_{\gamma})$ if $a\in(-1,1)$ in $\mathcal{S}(H_{\gamma})$ is taken to be zero.
At this point, it is worth recalling that functions $f=h+\overline{g} \in {\mathcal S}^0(H_{0})$ (i.e. $f\in{\mathcal S}(H_{\gamma})$ with $a=0$) are usually referred to as the slanted half-plane mappings with $\gamma$ and such mappings by \eqref{li2-eq3} obviously assume the form
\begin{equation}\label{li2-eq3a}
h(z)+e^{-2i\gamma}g(z)=\frac{z}{1-e^{i\gamma}z}
\end{equation}
so that $\gamma =0$ reduces to the corresponding right half-plane mappings.
For example, if $f_{0}=h_{0}+\overline{g_{0}} \in {\mathcal S^0}(H_{0})$ with the dilatation $\omega_{0}=g'_{0}/h'_{0}=-z$
then the shearing theorem of  Clunie and Sheil-Small  quickly yields (see \cite{Clunie})
\begin{equation}\label{eqHRN}
h_{0}=\frac{z-\frac{1}{2}z^2}{(1-z)^2}\quad{\rm and}\quad g_{0}=\frac{-\frac{1}{2}z^2}{(1-z)^2}.
\end{equation}
The function $f_0$ is extremal for the coefficient inequality for functions in  ${\mathcal K}_H^0$.
\begin{thm}{\rm (\cite[Theorem 2]{Dorff2012})}\label{ThmA}
If $f_k\in {\mathcal S^0}(H_{\gamma_k})$, $k=1,\ 2$, and $f_1\ast
f_2$ is locally univalent in $\ID$, then $f_1\ast f_2$ is convex in
the direction $-(\gamma_1 +\gamma_2)$.
\end{thm}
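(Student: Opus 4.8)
The plan is to recast the conclusion through the shearing theorem (Theorem~\ref{thmA}) and then to verify an analytic criterion for convexity in a prescribed direction. Write $\gamma=\gamma_1+\gamma_2$ and $F=f_1\ast f_2=H+\overline{G}$, where $H=h_1\ast h_2$ and $G=g_1\ast g_2$. Since ``$F$ is convex in the direction $-\gamma$'' means precisely that $e^{i\gamma}F$ has range convex in the horizontal direction, and since $e^{i\gamma}F=e^{i\gamma}H+\overline{e^{-i\gamma}G}$, the hypothesis that $F$ is locally univalent lets me apply Theorem~\ref{thmA} to $e^{i\gamma}F$: it suffices to prove that the analytic shear
\[ \Psi:=e^{i\gamma}H-e^{-i\gamma}G=e^{i\gamma}(h_1\ast h_2)-e^{-i\gamma}(g_1\ast g_2) \]
is univalent and maps $\ID$ onto a domain that is {\rm CHD}.

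To prepare the computation I would first record the defining data. From \eqref{li2-eq3a} each factor satisfies $\phi_k:=h_k+e^{-2i\gamma_k}g_k=z/(1-e^{i\gamma_k}z)$, while local univalence of $f_k$ gives the dilatation $\omega_k=g_k'/h_k'$ with $|\omega_k|<1$; solving these two relations expresses $h_k'$ and $g_k'$ in terms of $\phi_k'=(1-e^{i\gamma_k}z)^{-2}$ and $\omega_k$. I would also use the elementary Hadamard identity $\tfrac{z}{1-\lambda z}\ast \Phi=\lambda^{-1}\Phi(\lambda z)$, which in particular yields the clean relation $\phi_1\ast\phi_2=z/(1-e^{i\gamma}z)$. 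To establish that $\Psi$ is {\rm CHD} I would invoke an analytic criterion of Clunie--Sheil-Small type (cf.\ Royster--Ziegler): a locally univalent analytic $\Psi$ with $\Psi(0)=0$, $\Psi'(0)\neq0$ has {\rm CHD} range provided $\RE\{(1-e^{i\gamma}z)^2\Psi'(z)\}\ge 0$ on $\ID$, the factor $(1-e^{i\gamma}z)^2$ being dictated by the location $z=e^{-i\gamma}$ of the boundary pole of $\Psi$. Thus the whole problem is reduced to a single sign: showing $\RE\{(1-e^{i\gamma}z)^2\Psi'(z)\}\ge 0$ together with $\Psi'\neq 0$.

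The heart of the argument---and the step I expect to be the main obstacle---is the evaluation of $(1-e^{i\gamma}z)^2\Psi'(z)$. The difficulty is that convolution does not respect the mixed relation $\phi_k=h_k+e^{-2i\gamma_k}g_k$: expanding $\phi_1\ast\phi_2$ by bilinearity produces the unwanted cross terms $h_1\ast g_2$ and $g_1\ast h_2$, so $H=h_1\ast h_2$ and $G=g_1\ast g_2$ cannot simply be read off, and the arbitrary dilatations $\omega_1,\omega_2$ survive into $\Psi'$. I would therefore carry the $\omega_k$ explicitly, compute $H'$ and $G'$ from the closed forms of $h_k',g_k'$, and reorganize $(1-e^{i\gamma}z)^2\Psi'$ so that the hypothesis of local univalence of $F$ can be brought to bear. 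Concretely, local univalence means $J_F=|H'|^2-|G'|^2>0$, i.e.\ the dilatation $\widetilde\omega=G'/H'$ of $F$ satisfies $|\widetilde\omega|<1$; the goal is to massage the real part into a manifestly nonnegative expression, ideally of the shape $(\text{positive factor})\cdot(1-|\widetilde\omega|^2)$ or a quantity bounded below by using $|\omega_k|<1$. Getting the algebra to collapse to such a form, uniformly in the two unknown Schwarz functions $\omega_1,\omega_2$, is the crux.

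As a cross-check and possible shortcut I would also consider reducing the slanted case to the right half-plane case $\gamma_1=\gamma_2=0$ by an appropriate rotation, which would produce the direction $-(\gamma_1+\gamma_2)$ automatically from the two rotation angles. The caveat is that a rotation acts differently on the analytic and co-analytic parts of a harmonic mapping, so one must verify that the rotated factors still lie in $\mathcal{S}^{0}(H_{0})$ and that the rotation is compatible with the Hadamard product; managing this interaction is itself delicate, which is why I regard the direct criterion above as the more reliable route.
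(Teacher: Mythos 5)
Your reduction is sound: passing to $e^{i\gamma}F$ with $\gamma=\gamma_1+\gamma_2$, invoking Theorem~\ref{thmA}, and aiming at a Royster--Ziegler-type real-part criterion for $\Psi=e^{i\gamma}(h_1\ast h_2)-e^{-i\gamma}(g_1\ast g_2)$ is exactly the right frame (the paper itself gives no proof, citing \cite{Dorff2012}, but this is the frame of the known proof). The genuine gap is that the entire analytic content --- evaluating $\Psi'$ and proving the sign --- is left undone, and the route you propose for it would fail: you cannot ``compute $H'$ and $G'$ from the closed forms of $h_k',g_k'$,'' because the Hadamard product of two general closed-form functions has no closed form, and carrying two arbitrary Schwarz functions $\omega_1,\omega_2$ through such a computation leads nowhere. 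Moreover, the hypothesis $|\widetilde\omega|<1$ that you hope to feed into the estimate is not what makes the inequality work; in the actual argument local univalence of $f_1\ast f_2$ is used \emph{only} to license the appeal to Theorem~\ref{thmA}, while the real-part estimate uses only $|\omega_k|<1$.

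The missing idea is precisely the one you dismiss as an obstacle: the cross terms are the mechanism, via polarization. Since $h_k+e^{-2i\gamma_k}g_k=z/(1-e^{i\gamma_k}z)$ and $\frac{z}{1-\lambda z}\ast\Phi=\lambda^{-1}\Phi(\lambda z)$, the bilinear identity
\begin{equation*}
2\left(h_1\ast h_2-e^{-2i\gamma}g_1\ast g_2\right)
=\left(h_1-e^{-2i\gamma_1}g_1\right)\ast\left(h_2+e^{-2i\gamma_2}g_2\right)
+\left(h_1+e^{-2i\gamma_1}g_1\right)\ast\left(h_2-e^{-2i\gamma_2}g_2\right)
\end{equation*}
collapses $\Psi$ to the completely explicit expression
\begin{equation*}
\Psi(z)=\tfrac12\left[e^{i\gamma_1}\left(h_1-e^{-2i\gamma_1}g_1\right)(e^{i\gamma_2}z)
+e^{i\gamma_2}\left(h_2-e^{-2i\gamma_2}g_2\right)(e^{i\gamma_1}z)\right],
\end{equation*}
with no surviving convolutions. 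Since $(h_k-e^{-2i\gamma_k}g_k)'=\dfrac{1-e^{-2i\gamma_k}\omega_k}{(1+e^{-2i\gamma_k}\omega_k)(1-e^{i\gamma_k}z)^2}$, one gets
$e^{-i\gamma}(1-e^{i\gamma}z)^2\Psi'(z)=\tfrac12\sum_{k}\frac{1-u_k(z)}{1+u_k(z)}$ with $|u_k|<1$, whose real part is positive; note the criterion needs the extra unimodular factor $e^{-i\gamma}$ that your formulation omits. Without the polarization step your proof does not go through.
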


Theorem \ref{ThmA} generalizes the result of Dorff \cite[Theorem 5]{Dorff2001} who proved originally the same with $\gamma _1=\gamma _2=0$.

Generally, it is not easy to verify the local univalency of $f_1\ast f_2$ in $\ID$.
In~\cite{Li2}, Li and Ponnusamy obtained the following result as a generalization of \cite[Theorem 3]{Dorff2001}.


\begin{thm}\label{thmC}
Let $f=h+\overline{g}\in  {\mathcal S^0}(H_\gamma)$ with the
dilatation $\omega(z)=e^{i\theta}z^n$, where  $n=1,2$ and $\theta\in\IR$.
Then $f_0\ast f\in {\mathcal S}_H^0$ and is convex in the direction $-\gamma$.
\end{thm}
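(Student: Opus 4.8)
The plan is to reduce the whole statement to the \emph{local univalence} of $f_0\ast f$. Once that is known, Theorem~\ref{ThmA}, applied with $\gamma_1=0$ for $f_0\in\mathcal S^0(H_0)$ and $\gamma_2=\gamma$ for $f\in\mathcal S^0(H_\gamma)$, immediately gives that $f_0\ast f$ is convex in the direction $-\gamma$, hence univalent; moreover, since the coefficient of $z$ in $g_0$ vanishes (see~\eqref{eqHRN}), the $\overline z$-coefficient of $f_0\ast f$ is zero, so $f_0\ast f\in\mathcal S_H^0$. By Lewy's theorem, the problem therefore comes down to verifying that the dilatation $\widetilde\omega$ of $f_0\ast f$ satisfies $|\widetilde\omega(z)|<1$ on $\ID$.

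To compute $\widetilde\omega$, I would write $f_0\ast f=H+\overline G$ with $H=h_0\ast h$ and $G=g_0\ast g$, and use the convolution identities $\frac{z}{1-z}\ast\phi=\phi$ and $\frac{z}{(1-z)^2}\ast\phi=z\phi'$ together with $h_0+g_0=\frac{z}{1-z}$ and $h_0-g_0=\frac{z}{(1-z)^2}$ read off from~\eqref{eqHRN}. This yields $H=\frac12(h+zh')$ and $G=\frac12(g-zg')$, so that $H'=h'+\frac12 zh''$ and $G'=-\frac12 zg''$. Substituting the dilatation relation $g'=e^{i\theta}z^nh'$, which gives $zg''=e^{i\theta}z^n(nh'+zh'')$, and abbreviating $u=zh''/h'$, I obtain the compact formula
\[
\widetilde\omega=\frac{G'}{H'}=-e^{i\theta}z^{n}\,\frac{n+u}{2+u}.
\]
For $n=2$ the factor $2+u$ cancels and $\widetilde\omega=-e^{i\theta}z^{2}$, whence $|\widetilde\omega(z)|=|z|^{2}<1$ on $\ID$ and local univalence is immediate.

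The case $n=1$ is the heart of the matter: here $\widetilde\omega=-e^{i\theta}z(1+u)/(2+u)$ and I must show $|z|\,|1+u|<|2+u|$. I would solve the defining system $h+e^{-2i\gamma}g=\frac{z}{1-e^{i\gamma}z}$ (equation~\eqref{li2-eq3a}) together with $g'=e^{i\theta}zh'$ to get the explicit derivative $h'=\big[(1-e^{i\gamma}z)^2(1+e^{i(\theta-2\gamma)}z)\big]^{-1}$, and then evaluate $u=z(\log h')'$ in closed form. After clearing denominators the desired inequality takes the polynomial shape $|w|\,|1+w+2cw^2|<|2+cw+cw^2|$, where $w=e^{i\gamma}z$ (so $|w|<1$) and $|c|=1$. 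I expect this polynomial estimate to be the main obstacle.

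The decisive step is to examine the difference of the squared moduli. A direct expansion shows that, with $t=|w|^2$,
\[
|2+cw+cw^2|^2-|w+w^2+2cw^3|^2=4(1-t)\Big[(1+t+t^2)+(1+t)\RE(cw)+\RE(cw^2)\Big].
\]
Since $t<1$, it remains to prove the bracketed quantity is positive; bounding $\RE(cw)\ge-\sqrt t$ and $\RE(cw^2)\ge-t$ reduces it to $1-\sqrt t-t\sqrt t+t^2=(1-\sqrt t)^2(1+\sqrt t+t)$, which is strictly positive for $|w|<1$. This establishes $|\widetilde\omega(z)|<1$ on $\ID$, and, combined with the first paragraph, completes the proof.
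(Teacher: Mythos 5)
Your argument is correct, and it is worth noting at the outset that the paper itself gives no proof of this statement: Theorem~\ref{thmC} is imported from Li and Ponnusamy \cite{Li2} (generalizing \cite[Theorem 3]{Dorff2001}), so there is no in-paper proof to match. Your reduction to $|\widetilde\omega|<1$ via Theorem~\ref{ThmA} and Lewy's theorem, and the identities $H=\tfrac12(h+zh')$, $G=\tfrac12(g-zg')$ coming from $h_0\pm g_0$, are exactly the standard skeleton used in the cited sources; the $n=2$ cancellation $\widetilde\omega=-e^{i\theta}z^2$ is also the known mechanism. Where you genuinely diverge is the endgame for $n=1$: the published proofs (and the analogous Theorem~\ref{univalent4} in this paper) write $\widetilde\omega$ as $-e^{i\theta}z\,p(z)/p^{*}(z)$ for a self-inversive pair and invoke Cohn's rule to locate the zeros of $p$ inside $\overline{\ID}$, whereas you expand $|2+cw+cw^2|^2-|w+w^2+2cw^3|^2$ directly, factor out $4(1-t)$, and bound the bracket below by $(1-\sqrt t)^2(1+\sqrt t+t)>0$. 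I checked the algebra: the formula for $u=zh''/h'$, the identity
\[
|2+cw+cw^2|^2-|w+w^2+2cw^3|^2=4(1-t)\bigl[(1+t+t^2)+(1+t)\RE(cw)+\RE(cw^2)\bigr],
\]
and the final factorization are all correct, and $2+cw+cw^2\neq0$ on $\ID$ so $H'\neq0$. Your route buys a completely elementary, self-contained estimate with no appeal to Cohn's rule; the Cohn-rule route is shorter once one has the machinery and generalizes more readily when the numerator and denominator are not reflections of one another. Either way the proof is complete.
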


In 2010, Bshouty and Lyzzaik~\cite{Bshouty2010} brought out a collection of open problems and conjectures
on planar harmonic mappings, proposed by many colleagues throughout the past quarter of a century.  Dorff et. al
(see \cite[Problem~3.26]{Bshouty2010}) posed the following open question.

\begin{problem}{(M.~Dorff, M.~Nowak and M.~Woloszkiewicz)}\label{prob}
\begin{enumerate}
\item[\textbf{(a)}] Let $f_{0}=h_{0}+\overline{g_{0}} \in {\mathcal S}^{0}(H_{0})$ be as above with the dilatation $\omega_{0}=g'_{0}/h'_{0}=-z$,
and  $f=h+\overline{g}\in {\mathcal S}(H_{0})$ with the dilatation $\omega(z)=(z+a)/(1+az)$,  $a\in(-1,1)$.
Then $f_{0}*f\in S_{H}^{0}$ and is convex in the direction of the real axis. Determine the other values of $a\in \mathbb{D}$
for which the corresponding result holds.

\item[\textbf{(b)}] Let $f_{n}\in {\mathcal S^0}(H_{0})$ with the  dilatations $\omega_{n}(z)=e^{i\theta}z^{n}~(\theta\in\mathbb{R},n\in\mathbb{N})$. Determine the values of $n$ for which $f_{n}*f$ are univalent.
\end{enumerate}
\end{problem}

One of the proposers of the above problems communicated to the first author about a typo in~\cite[{\textbf{Problem 3.26(b)}}]{Bshouty2010}. Again a typo in~\cite[{\textbf{Problem 3.26(a)}}]{Bshouty2010} is now corrected and the corrected formulation is stated in Problem~\ref{prob}\textbf{(b)}.
Our primary aim in this paper is to present a solution to this open problem.
However, Problem~\ref{prob}\textbf{(a)} has been solved by Li and  Ponnusamy~\cite{Li1}, and in \cite{Jiang2015,Li2} the same has been solved in a more general setting which led to further investigation and interest in this topic.
\begin{note}\label{n1}
In~\cite[Theorem 4]{Dorff2012}, the following result was shown under the assumption that $f=h+\overline{g}\in\mathcal{K}_{H}^{0}$ with $h(z)+g(z)=z/(1-z)$ and $\omega(z)=\frac{z+a}{1+az}$ with $a\in(-1,1)$. Unfortunately, the first condition gives $g'(0)=0$ while the second condition gives $g'(0)=a$. In view of this reasoning, we reformulate their result in the form which is needed in our proof.
\end{note}

\begin{theorem} \label{univalent4}
Let $f\in {\mathcal S}(H_0)$, i.e.,  $f=h+\overline{g}\in  {\mathcal K}_{H}$ with $h(z)+g(z)=(1+a)\frac{z}{1-z}$, and $\omega(z)=\frac{z+a}{1+a z}$ with $a\in(-1,1)$.
Then $f_0\ast f\in {\mathcal S}_H^0$ and is convex in the direction of the real axis.
\end{theorem}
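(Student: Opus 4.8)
The plan is to realize $F:=f_0\ast f=H+\overline{G}$ (with $H=h_0\ast h$ and $G=g_0\ast g$) as a sheared harmonic map and to verify the two hypotheses of the shearing theorem (Theorem~\ref{thmA}): local univalence of $F$, and univalence together with convexity in the horizontal direction of the analytic function $H-G$. First I would extract the data of $f$. From $h(z)+g(z)=(1+a)z/(1-z)$ and $g'=\omega h'$ with $\omega(z)=(z+a)/(1+az)$, a short computation gives
\[
h'(z)=\frac{1+az}{(1+z)(1-z)^2},\qquad g'(z)=\frac{z+a}{(1+z)(1-z)^2},\qquad (h-g)'(z)=\frac{1-a}{1-z^2}.
\]
Next I would record the two convolution identities produced by $f_0$: since $h_0$ and $g_0$ in \eqref{eqHRN} have Taylor coefficients $(h_0)_n=(n+1)/2$ and $(g_0)_n=-(n-1)/2$, one obtains for every $\phi$ with $\phi(0)=0$ the operator formulas $h_0\ast\phi=\tfrac12(z\phi'+\phi)$ and $g_0\ast\phi=\tfrac12(\phi-z\phi')$, so that $H'=h'+\tfrac12 zh''$ and $G'=-\tfrac12 zg''$ are both rational.

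For local univalence I would compute the dilatation $\widetilde\omega=G'/H'$ of $F$. Substituting the expressions above and clearing the common factor $(1+z)^2(1-z)^3$ yields, after routine simplification,
\[
\widetilde\omega(z)=\frac{-z\bigl(2z^2+(1+3a)z+(1+a)\bigr)}{(1+a)z^2+(1+3a)z+2}=\frac{-z^3\,Q(1/z)}{Q(z)},\qquad Q(z):=(1+a)z^2+(1+3a)z+2.
\]
The decisive observation is that the numerator is exactly $-z^3Q(1/z)$, i.e. $\widetilde\omega$ is built from the reciprocal polynomial of $Q$. Because $Q$ has real coefficients, $Q(1/z)=\overline{Q(z)}$ on $|z|=1$, so $|\widetilde\omega|\equiv 1$ there; the whole problem therefore collapses to showing that $Q$ has no zero in $\overline{\mathbb D}$, after which the maximum modulus principle forces $|\widetilde\omega(z)|<1$ in $\mathbb D$. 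This zero–localization is the main obstacle, but it is elementary: the product of the roots of $Q$ equals $2/(1+a)>1$, while $Q(0)=2>0$, $Q(1)=4(1+a)>0$, $Q(-1)=2(1-a)>0$, and the discriminant $9a^2-2a-7=(a-1)(9a+7)$ changes sign only at $a=-7/9$. Handling the two resulting cases (for $a>-7/9$ a conjugate pair of modulus $\sqrt{2/(1+a)}>1$; for $a\le -7/9$ two positive real roots, both exceeding $1$ because $Q(1)>0$ and $Q'(1)=3+5a<0$) shows $Q\neq0$ on $\overline{\mathbb D}$ for all $a\in(-1,1)$. Since $\widetilde\omega(0)=0$, this also gives $F_{\overline z}(0)=0$.

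Finally, for convexity in the horizontal direction I would shear. Combining the identities above gives $H-G=\tfrac12\bigl(z(h'+g')+(h-g)\bigr)$, hence
\[
(H-G)'(z)=\frac{1+2az+z^2}{(1+z)(1-z)^3},\qquad (1-z)^2(H-G)'(z)=\frac{1+2az+z^2}{1-z^2}.
\]
A partial-fraction split $\dfrac{1+2az+z^2}{1-z^2}=-1+\dfrac{1+a}{1-z}+\dfrac{1-a}{1+z}$ together with $\RE\{1/(1\pm z)\}>\tfrac12$ in $\mathbb D$ and $1\pm a>0$ yields $\RE\{(1-z)^2(H-G)'\}>0$, which is the standard sufficient condition (the $\mu=0$ case of the quadratic criterion $\RE\{(1-2z\cos\mu+z^2)\varphi'\}>0$) for $H-G$ to be univalent with range convex in the horizontal direction. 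Applying Theorem~\ref{thmA} to the locally univalent map $F$ then shows that $F=f_0\ast f$ is univalent, lies in $\mathcal{S}_H^0$, and is convex in the direction of the real axis. I expect the zero-localization of $Q$ (equivalently, the sharp use of $a<1$) to be the only genuinely delicate point; the rest is bookkeeping.
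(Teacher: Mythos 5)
Your proposal is correct, and it verifies the same key identity that drives the paper's proof: after convolving with $h_0$ and $g_0$, the dilatation of $f_0\ast f$ is $-z$ times the ratio of a real quadratic and its reciprocal polynomial (your $-z^{3}Q(1/z)/Q(z)$ is exactly the paper's $-z\,p(z)/p^{*}(z)$ with $Q=2p^{*}$), so that everything reduces to locating the roots of one quadratic relative to the unit circle. Where you differ is in the tools. For the root location the paper applies Cohn's rule once, producing a linear factor whose single zero $-(1+3a)/(a+3)$ is visibly in $\mathbb{D}$; your discriminant/Vieta case analysis (conjugate pair of modulus $\sqrt{2/(1+a)}>1$ for $a>-7/9$, two real roots beyond $1$ for $a\le -7/9$ via $Q(1)>0$ and $Q'(1)<0$) is more elementary but longer, and both are sound. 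The more substantive difference is the univalence/CHD conclusion: the paper simply invokes Dorff's convolution theorem (its Theorem~B, after the normalization caveat discussed in Note~1) so that local univalence alone finishes the job, whereas you re-derive that step from scratch by computing $(H-G)'=(1+2az+z^{2})/\bigl((1+z)(1-z)^{3}\bigr)$, checking $\RE\{(1-z)^{2}(H-G)'\}>0$ via the partial-fraction split $-1+\tfrac{1+a}{1-z}+\tfrac{1-a}{1+z}$, and then applying the shearing theorem. Your route is self-contained and sidesteps the normalization mismatch ($g'(0)=a\neq 0$) that the paper has to address when citing Dorff's result for the class ${\mathcal S}^{0}(H_{0})$; the paper's route is shorter but leans on the external theorem. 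Both arguments are complete and the computations you report ($h'$, $g'$, the rational form of $\widetilde\omega$, and $(H-G)'$) all check out.
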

\begin{proof}
In view of~\cite[Theorem A]{Dorff2001} and Lewy's Theorem, we just need to show that the dilatation $\widetilde{\omega}$ of $f_0\ast f$ satisfies the
condition $|\widetilde{\omega}(z)|<1$ for all $z\in \mathbb{D}$.
By assumption, we easily see that
$$h'(z)=\frac{1+a}{(1+\omega(z))(1-z)^2}\quad{\rm and}\quad h''(z)=\frac{(1+a)\left[2(1+\omega(z))-\omega'(z)(1-z)\right]}{(1+\omega(z))^2(1-z)^3}
$$
and the rest of the proof follows as in \cite[Theorem 4]{Dorff2012}. For the sake of completeness we include the necessary details here. Indeed,
since $g'(z)=\omega(z)h'(z)$, we have
\begin{equation*}
\begin{split}
\widetilde{\omega}(z)&=\frac{(g_{0}\ast g)'(z)}{(h_{0}\ast h)'(z)}\\
&=-z\frac{\omega'(z)h'(z)+\omega(z)h''(z)}{2h'(z)+zh''(z)}\\
&=-z\frac{p(z)}{p^{*}(z)}=-z\frac{(z+A)(z+B)}{(1+\overline{A}z)(1+\overline{B}z)},
\end{split}
\end{equation*}
where
$$p(z)=z^2+\frac{1+3a}{2}z+\frac{1+a}{2},  \quad  p^{*}(z)=z^2 \overline{p(1/\overline{z})}=1+\frac{1+3a}{2}z+\frac{1+a}{2}z^2,
$$
and $A, B$ are two zeros of $p(z)$. By using Cohn's rule,
$$q_{1}(z)=\frac{\overline{a_{2}}p(z)-a_{0}p^{*}(z)}{z}=\frac{(a+3)(1-a)}{4}\left(z+\frac{1+3a}{a+3}\right).$$
So $q_{1}(z)$ has one zero at $z_{0}=-\frac{1+3a}{a+3}$ which is on the unit disk for $-1<a<1$. Thus, $|A|,|B|<1$.
\end{proof}

\section{Main Results}

We now state and prove our main result which solves Problem~\ref{prob}\textbf{(b)} for $n=1$.

\begin{theorem}\label{thm-ff1}
Let  $f=h+\overline{g}\in{\mathcal S}(H_{0})$ with $h+g=(1+a)z/(1-z)$ and the dilatation $\omega(z)=(z+a)/(1+a z)$, where $-1<a<1$, and
$f_1 =h_{1}+\overline{g_{1}}\in{\mathcal S^0}(H_{0})$ with dilatation $\omega_{1}(z)=e^{i\theta}z~(\theta\in\mathbb{R})$.
Then $f_{1}*f$ is locally univalent and convex in the horizontal direction.
\end{theorem}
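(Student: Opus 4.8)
The plan is to show that $f_1\ast f$ is sense-preserving, i.e. that its dilatation $\widetilde\omega=(g_1\ast g)'/(h_1\ast h)'$ satisfies $|\widetilde\omega(z)|<1$ on $\ID$, and then to obtain convexity in the horizontal direction from the shearing theorem. Writing $u=e^{i\theta}$, the hypotheses $h_1+g_1=z/(1-z)$ and $g_1'=uz\,h_1'$ give $h_1'=1/[(1+uz)(1-z)^2]$, while for $f$ one finds $h'=(1+az)/[(1-z)^2(1+z)]$ and $g'=(z+a)/[(1-z)^2(1+z)]$. Using the Hadamard identities $\frac{z}{1-z}\ast\phi=\phi$, $\frac{z}{(1-z)^2}\ast\phi=z\phi'$, $\frac{z}{1-\xi z}\ast\phi=\xi^{-1}\phi(\xi z)$ and $z(\phi\ast\psi)'=(z\phi')\ast\psi$, a partial-fraction expansion of $zh_1'$ and $zg_1'$ leads to the closed forms
\begin{equation*}
z(h_1\ast h)'=\frac{u}{(1+u)^2}\bigl[h(z)-h(-uz)\bigr]+\frac{1}{1+u}\,zh'(z),\qquad
z(g_1\ast g)'=\frac{u}{(1+u)^2}\bigl[g(-uz)-g(z)\bigr]+\frac{u}{1+u}\,zg'(z).
\end{equation*}

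A useful reduction comes from the shear. Convolving the two half-plane relations against $h_1,g_1,h,g$ and combining the four resulting identities gives the exact formula $(h_1\ast h)-(g_1\ast g)=(1+a)h_1-g$, whence
\begin{equation*}
(h_1\ast h)'-(g_1\ast g)'=(1+a)h_1'-g'=\frac{1+a(1-u)z-uz^2}{(1-z)^2(1+z)(1+uz)}
\end{equation*}
is rational. Since $|\widetilde\omega|<1$ is equivalent to $J_{f_1\ast f}=\RE\bigl[\bigl((h_1\ast h)'-(g_1\ast g)'\bigr)\,\overline{(h_1\ast h)'+(g_1\ast g)'}\bigr]>0$, the whole problem is reduced to controlling the single transcendental quantity $(h_1\ast h)'+(g_1\ast g)'$ against the rational factor just displayed.

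The main obstacle is that, unlike the case $\theta=\pi$ of Theorem~\ref{univalent4}, the dilatation is no longer rational: both $h$ and $g$ carry the term $\tfrac{1-a}{4}\log\frac{1+z}{1-z}$, and the dilated arguments $h(-uz),g(-uz)$ feed a genuine logarithmic contribution into $(h_1\ast h)'$ and $(g_1\ast g)'$ that does not cancel in $\widetilde\omega$, so Cohn's rule is unavailable. The device I would use is a boundary--maximum-principle argument. In the two displayed formulas the logarithmic term enters $z(h_1\ast h)'$ and $z(g_1\ast g)'$ with the \emph{same} coefficient $u\,\tfrac{1-a}{4}/(1+u)^2$, and I expect this to force $|z(g_1\ast g)'|=|z(h_1\ast h)'|$, that is $|\widetilde\omega|=1$, on $\partial\ID$. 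Establishing this boundary identity explicitly --- using the boundary relations $\overline{z/(1-z)}=-1-z/(1-z)$ and the fact that $(1+z)/(1-z)$ is purely imaginary on $\partial\ID$ --- together with the verification that $(h_1\ast h)'$ has no zero in $\ID$, is the crux. Granting these, the maximum principle applied to $\widetilde\omega$ (which is analytic and vanishes at the origin) yields $|\widetilde\omega|<1$ on $\ID$, hence local univalence by Lewy's theorem; as a consistency check, for $\theta=\pi$ this boundary identity is precisely the statement that the dilatation in Theorem~\ref{univalent4} is a finite Blaschke product.

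For the remaining assertion I would invoke the shearing theorem (Theorem~\ref{thmA}): it suffices to prove that the analytic shear $(h_1\ast h)-(g_1\ast g)=(1+a)h_1-g$ is univalent with a {\rm CHD} range. Here the explicit derivative $\bigl(1+a(1-u)z-uz^2\bigr)/[(1-z)^2(1+z)(1+uz)]$ is well suited to a standard direction criterion of Clunie--Sheil-Small/Royster--Ziegler type, the verification reducing to the location of the two zeros of $1+a(1-u)z-uz^2$ (whose product has modulus $1$) relative to $\partial\ID$ for $-1<a<1$. Combined with the local univalence established above, Theorem~\ref{thmA} then gives that $f_1\ast f$ is univalent and convex in the horizontal direction.
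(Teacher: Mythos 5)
Your setup is sound: the closed forms for $z(h_1\ast h)'$ and $z(g_1\ast g)'$, the identity $(h_1\ast h)-(g_1\ast g)=(1+a)h_1-g$, and the observation that the logarithmic terms enter numerator and denominator of $\widetilde\omega$ with the same coefficient are all correct (the paper works with the equivalent dual expressions $z(h\ast h_1)'=\tfrac{1+a}{2}zh_1'+\tfrac{1-a}{4}[h_1(z)-h_1(-z)]$ and $z(g\ast g_1)'=\tfrac{1+a}{2}zg_1'-\tfrac{1-a}{4}[g_1(z)-g_1(-z)]$). But the step you yourself identify as the crux is false: $|\widetilde\omega|$ is \emph{not} identically $1$ on $\partial\ID$. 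Equality of the additive transcendental parts of numerator and denominator does not give equality of their moduli. Concretely, take $\theta=0$ and $z=e^{it}$ with $t\in(0,\pi)$: then $h_1(z)-h_1(-z)=\tfrac{i}{2\sin t}+\tfrac12\ln\cot\tfrac t2+\tfrac{i\pi}{4}$ while $g_1(z)-g_1(-z)=\tfrac{i}{2\sin t}-\tfrac12\ln\cot\tfrac t2-\tfrac{i\pi}{4}$, so $|g_1(z)-g_1(-z)|^2-|h_1(z)-h_1(-z)|^2=-\pi/(2\sin t)\neq 0$, and tracking this through $|\widetilde\omega(e^{it})|^2$ shows the boundary modulus is strictly below $1$ there for all but at most one value of $a$. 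This is also forced structurally: $\widetilde\omega$ is analytic in $\ID$, continuous up to $\partial\ID$ off four points, and (as you note) not rational; unimodular boundary values would let it reflect to a rational function. So the maximum-modulus argument as stated has no identity to rest on, and the problem of proving the correct inequality $|\widetilde\omega(e^{it})|\le 1$ is left entirely open in your outline.

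That inequality is exactly where the work lies, and it needs two separate inputs. The paper splits $|z(g\ast g_1)'|^2<|z(h\ast h_1)'|^2$ into (i) the quadratic-term estimate $|g_1(z)-g_1(-z)|<|z|\,|h_1(z)-h_1(-z)|$, which follows from Lemma~\ref{lemC} applied to the convex map $f_1$ plus Schwarz's lemma, and (ii) the cross-term estimate $\RE\{J(z)\}>0$ for an explicit auxiliary analytic function $J$. Part (ii) is the heart of the proof: on $|z|=1$ the factors $\tfrac{1+z}{1-z}$ and $\tfrac{1-e^{i\theta}z}{1+e^{i\theta}z}$ are purely imaginary, so the transcendental $\log$-modulus contributions are multiplied into the imaginary part and drop out of $\RE\{J(e^{it})\}$, leaving only piecewise-constant argument differences $A-B\in\{0,\pm\pi\}$ whose sign is matched against $\sin(\theta+t)$; and one must use Lemma~\ref{lemD} rather than the bare maximum principle because $J$ is singular at the four boundary points $1,-1,e^{i(\pi-\theta)},e^{-i\theta}$ (a point your outline also glosses over for $\widetilde\omega$ itself). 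Finally, your CHD step via the shear $(1+a)h_1-g$ is an unnecessary detour: once local univalence is established, Theorem~\ref{ThmA} with $\gamma_1=\gamma_2=0$ yields convexity in the horizontal direction directly.
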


The problem of determining other values of $a\in \mathbb{D}$ and $|\epsilon|=1$ with $\omega(z)=\epsilon (z+a)/(1+az)$ in Theorem \ref{thm-ff1}
remains open. As in the case of ~\cite[Theorem 1.3]{Li2}, establishing the analog of Theorem \ref{thm-ff1} for slanted half-plane mappings
is another problem which needs further investigation.  Note also that Theorem \ref{thm-ff1} for $\theta =\pi$ is contained in Theorem \ref{univalent4}.

To prove our main result, we need the following lemmas.



\begin{lemma}{\rm(\cite[5.8 Corollary]{Clunie})}\label{lemC}
If $f=h+\overline{g}$ is convex, then $\left|\frac{g(z_{1})-g(z_{2})}{h(z_{1})-h(z_{2})}\right|<1$ for all $z_1,z_2\in \mathbb{D}$.
\end{lemma}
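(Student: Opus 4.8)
The plan is to derive the inequality from the \emph{full} convexity of $f$ by exploiting that a convex domain is convex in every direction, and then to reduce matters to the univalence of an associated one-parameter family of analytic functions via the shearing theorem (Theorem~\ref{thmA}). Concretely, fix $\phi\in\mathbb{R}$ and consider the rotated mapping $e^{-i\phi}f=e^{-i\phi}h+\overline{e^{i\phi}g}$. This is harmonic and locally univalent (being a rotation of $f$), and since $f$ is univalent onto a convex domain, $e^{-i\phi}f$ is univalent onto the rotated convex domain, whose intersection with every horizontal line is a segment; hence its range is CHD. Its analytic and co-analytic parts are $e^{-i\phi}h$ and $e^{i\phi}g$, so applying Theorem~\ref{thmA} to $e^{-i\phi}f$ yields that $\psi_\phi:=e^{-i\phi}h-e^{i\phi}g$ is univalent in $\mathbb{D}$ for every $\phi\in\mathbb{R}$.

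Next I would translate this univalence into an estimate on the difference quotients. Writing $a=h(z_1)-h(z_2)$ and $b=g(z_1)-g(z_2)$ for fixed distinct $z_1,z_2\in\mathbb{D}$, the univalence of $\psi_\phi$ gives $e^{-i\phi}a-e^{i\phi}b\neq0$, i.e. $a\neq e^{2i\phi}b$, for every $\phi\in\mathbb{R}$. As $\phi$ ranges over $\mathbb{R}$ the points $e^{2i\phi}b$ trace the entire circle of radius $|b|$ centred at the origin; since $a$ must avoid this whole circle (and the choice $\phi=0$ already rules out $a=b=0$), I conclude $|a|\neq|b|$ for every pair $z_1\neq z_2$.

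The main obstacle is that this conclusion is symmetric in $h$ and $g$ and only delivers $|a|\neq|b|$, whereas the lemma asks for the directed inequality $|b|<|a|$. To fix the sign I would invoke that $f$ is sense-preserving, so by Lewy's theorem $J_f=|h'|^2-|g'|^2>0$ throughout $\mathbb{D}$. Fixing $z_1$, I regard $D(z_2):=|h(z_1)-h(z_2)|^2-|g(z_1)-g(z_2)|^2$ as a continuous real function on the connected set $\mathbb{D}\setminus\{z_1\}$. The previous step shows $D$ never vanishes there, while as $z_2\to z_1$ a first-order expansion gives $D(z_2)=J_f(z_1)\,|z_2-z_1|^2+o(|z_2-z_1|^2)$, so $D>0$ on a punctured neighbourhood of $z_1$. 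By connectedness $D$ keeps a constant sign, hence $D>0$ on all of $\mathbb{D}\setminus\{z_1\}$; this is precisely $|b|<|a|$, which in particular forces $h(z_1)\neq h(z_2)$, so the quotient in the statement is well defined and strictly bounded by $1$ for all $z_1,z_2\in\mathbb{D}$.
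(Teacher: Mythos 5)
Your argument is correct, but note that the paper does not prove this lemma at all: it is imported verbatim from Clunie and Sheil-Small~\cite[5.8 Corollary]{Clunie}, so the honest comparison is with the classical proof there. In the original source the inequality follows in one stroke from their Theorem~5.7, which asserts that for a convex $f=h+\overline{g}$ every combination $h+\epsilon g$ with $|\epsilon|\le 1$ is close-to-convex, hence univalent: if $|g(z_1)-g(z_2)|\ge|h(z_1)-h(z_2)|$ one simply takes $\epsilon=-(h(z_1)-h(z_2))/(g(z_1)-g(z_2))$, which satisfies $|\epsilon|\le1$, and contradicts univalence of $h+\epsilon g$ directly --- the directed inequality comes out immediately, with no sign to fix afterwards. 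Your route uses the strictly weaker input that $e^{-i\phi}h-e^{i\phi}g=e^{-i\phi}\left(h-e^{2i\phi}g\right)$ is univalent, i.e.\ univalence of $h+\epsilon g$ only for $|\epsilon|=1$, obtained from the shearing theorem (Theorem~\ref{thmA}) applied to the rotations $e^{-i\phi}f$; each application is legitimate, since $e^{-i\phi}f$ has the same Jacobian as $f$ (so is locally univalent) and its convex range is CHD, and the set $\{e^{2i\phi}b:\phi\in\mathbb{R}\}$ is indeed the full circle of radius $|b|$, with $a=b=0$ excluded by univalence. As you correctly recognize, this only yields $|a|\ne|b|$, and your repair --- $D(z_2)=|a|^2-|b|^2$ is continuous and zero-free on the connected set $\mathbb{D}\setminus\{z_1\}$, while Lewy's theorem gives $D(z_2)=J_f(z_1)|z_2-z_1|^2+o\!\left(|z_2-z_1|^2\right)>0$ near $z_1$, so $D>0$ throughout --- is sound and also delivers $h(z_1)\neq h(z_2)$, making the quotient well defined. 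What your version buys is self-containedness relative to this paper: it invokes only Theorem~\ref{thmA} and Lewy's theorem, both already stated here, rather than the deeper close-to-convexity theorem of Clunie and Sheil-Small; the price is the extra connectedness/sign argument, which the classical proof avoids by having univalence on the whole closed disk of parameters $|\epsilon|\le1$ instead of just its boundary.
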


\begin{lemma}{\rm(\cite[Lemma 3]{Romney2013})}\label{lemD}
Let $f:\mathbb{D}\to \mathbb{C}$ be nonconstant and analytic, where $f(\overline{\mathbb{D}})$ omits some point $w\in\{z:\RE\,z <0\}$. Suppose that $\widehat{f}(e^{it})=\lim_{z\to e^{it}}f(z)$ exists for all $t\in \mathbb{R}$ (where possibly $\widehat{f}(e^{it})=\infty$). If $\RE\,\{\widehat{f}(e^{it})\}\geq 0$ for all $t$ such that $\widehat{f}(e^{it})$ is finite, then $\RE\,\{f(z)\}>0$ for all $z\in \mathbb{D}$.
\end{lemma}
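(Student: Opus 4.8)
The plan is to reduce the assertion to the maximum modulus principle for a single bounded analytic function, chosen to fit the geometry of the omitted value. Since $w$ has $\RE\,w<0$, set $b=-\overline{w}$, so that $\RE\,b>0$, and introduce the Cayley-type Möbius transformation $\Phi(\zeta)=\frac{\zeta-b}{\zeta-w}$ together with the composition $\Psi=\Phi\circ f=\frac{f-b}{f-w}$. A direct computation gives $|\zeta-b|^{2}-|\zeta-w|^{2}=4(\RE\,w)(\RE\,\zeta)$, so that, because $\RE\,w<0$, one obtains the key equivalences $|\Phi(\zeta)|<1\iff \RE\,\zeta>0$ and $|\Phi(\zeta)|\le 1\iff \RE\,\zeta\ge 0$, together with $\Phi(\infty)=1$. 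Thus proving $\RE\,f(z)>0$ on $\mathbb{D}$ is exactly the same as proving $|\Psi(z)|<1$ on $\mathbb{D}$, and the hypotheses on $f$ will translate into boundary information about $\Psi$.

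The crux, and the step I expect to be the main obstacle, is to show that $\Psi$ is bounded on $\mathbb{D}$, for only then does $\Psi$ become accessible to the maximum principle. Note first that $\Psi$ is analytic on $\mathbb{D}$ because the only pole of $\Phi$ sits at $w$, a value $f$ omits. The only way $\Psi$ could fail to be bounded is if $f(z_{n})\to w$ along some sequence $z_{n}$; passing to a convergent subsequence $z_{n}\to z_{*}\in\overline{\mathbb{D}}$, an interior limit would force $f(z_{*})=w$, contradicting that $f$ omits $w$, while a boundary limit $z_{*}=e^{it_{*}}$ would force $\widehat{f}(e^{it_{*}})=w$ by the assumed existence of the unrestricted boundary limit, contradicting $\RE\,w<0\le\RE\,\widehat{f}(e^{it_{*}})$. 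Hence $\Psi$ is bounded. This is precisely the point at which both hypotheses, the omitted left-half-plane value and the everywhere existence of the boundary limits, are consumed, and treating the interior and boundary cases uniformly is the delicate matter.

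With $\Psi\in H^{\infty}$ secured, I would next record the boundary values $\widehat{\Psi}(e^{it})=\Phi(\widehat{f}(e^{it}))$, which hold by continuity of $\Phi$ on the Riemann sphere $\widehat{\mathbb{C}}$: where $\widehat{f}(e^{it})$ is finite it lies in the closed right half-plane, so $|\widehat{\Psi}(e^{it})|\le 1$, and where $\widehat{f}(e^{it})=\infty$ one has $\widehat{\Psi}(e^{it})=\Phi(\infty)=1$, again of modulus $1$. Thus $|\widehat{\Psi}(e^{it})|\le 1$ for every $t$, whence $|\Psi|\le 1$ throughout $\mathbb{D}$, since the supremum of a bounded analytic function equals the essential supremum of its boundary values. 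Finally, $\Psi$ is nonconstant because $f$ is nonconstant and $\Phi$ is a nondegenerate Möbius map (its determinant $b-w=-2\RE\,w\ne 0$), so the strict maximum modulus principle upgrades this to $|\Psi(z)|<1$ for all $z\in\mathbb{D}$; by the equivalence established in the first step this is exactly $\RE\,f(z)>0$ on $\mathbb{D}$, which completes the argument.
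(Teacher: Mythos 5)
Your proof is correct. Note, however, that the paper itself offers no proof of this lemma: it is quoted verbatim from Romney's thesis \cite[Lemma 3]{Romney2013}, with only the accompanying remark that it is ``another convenient formulation of maximum modulus theorem for analytic functions.'' Your argument is therefore best judged on its own merits, and it holds up: the identity $|\zeta-b|^2-|\zeta-w|^2=4(\RE w)(\RE \zeta)$ with $b=-\overline{w}$ is correct and gives exactly the equivalence $|\Phi(\zeta)|\le 1\iff\RE\zeta\ge 0$ with $\Phi(\infty)=1$; the compactness argument for boundedness of $\Psi=\Phi\circ f$ is sound, since $|\Psi(z_n)|\to\infty$ forces $f(z_n)\to w$ (continuity of $\Phi^{-1}$ at $\infty$), and both the interior and boundary limit positions for $z_*$ are genuinely excluded by the hypotheses; and the passage from $|\widehat{\Psi}|\le 1$ on the boundary to $|\Psi|<1$ inside is legitimate. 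Two small observations: first, you could avoid the $H^\infty$ essential-supremum fact entirely, since the everywhere-existence of unrestricted boundary limits means $\Psi$ extends \emph{continuously} to $\overline{\mathbb{D}}$, so the classical maximum modulus principle on the closed disk applies directly; second, you never actually use the omission hypothesis on the boundary --- you derive $\widehat{f}(e^{it})\ne w$ from the condition $\RE\{\widehat{f}(e^{it})\}\ge 0$ --- so your argument proves the lemma under the slightly weaker assumption that $w$ is merely omitted from $f(\mathbb{D})$. Your $\Phi$ is the reciprocal of the standard M\"obius map sending the omitted left half-plane point into the disk, so the mechanism is exactly the one the paper's remark gestures at.
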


We remark that Lemma \ref{lemD} is another convenient formulation of maximum modulus theorem for analytic functions.


\begin{proof}[Proof of Theorem~\ref{thm-ff1}] Conclusion of Theorem~\ref{thm-ff1} holds when  $\theta=\pi$ (see Theorem \ref{univalent4}) and thus,
we will assume throughout the discussion that $\theta\neq\pi$. Now, by \eqref{li2-eq3a} and the assumption on $f$, we have
$$h'(z)+g'(z)=\frac{1+a}{(1-z)^2} ~\mbox{ and }~\omega(z)=\frac{ g'(z)}{h'(z)}=\frac{z+a}{1+az}.
$$
Solving these two equations for $h'$ and $g'$ and then integrating the resulting equations give
\begin{equation}\label{eqhg}
h(z)=\frac{1+a}{2}\,\frac{z}{1-z}+\frac{1-a}{4}\log\left(\frac{1+z}{1-z}\right) ~\mbox{ and }~ g(z)=\frac{1+a}{2}\,\frac{z}{1-z}-\frac{1-a}{4}\log\left(\frac{1+z}{1-z}\right).
\end{equation}
Again, by the assumption on $f_1$, we have
$$h_{1}(z)+g_{1}(z)=\frac{z}{1-z}~\mbox{ and }~\omega_{1}(z)=\frac{ g'_{1}(z)}{h'_{1}(z)}=e^{i\theta}z.
$$
Solving these two equations gives
\begin{equation}\label{eqf1}
\begin{split}
h_{1}(z)&=\frac{1}{1+e^{i\theta}}\,\frac{z}{1-z}+\frac{e^{i\theta}}{(1+e^{i\theta})^2}\log\left(\frac{1+e^{i\theta}z}{1-z}\right), ~\mbox{and}\\
g_{1}(z)&=\frac{e^{i\theta}}{1+e^{i\theta}}\,\frac{z}{1-z}-\frac{e^{i\theta}}{(1+e^{i\theta})^2}\log\left(\frac{1+e^{i\theta}z}{1-z}\right).
\end{split}
\end{equation}
For any $F(z)=\sum_{n=1}^{\infty}c_{n}z^{n}$ analytic in $\ID$, we see that $(z/(1-z))*F(z)=F(z)$ and
\begin{equation}\label{eqhgC}
\log\left(\frac{1-xz}{1-z}\right)*F(z) =\int_0^z\frac{F(t)-F(xt)}{t}\,dt\quad \mbox{for $|x|\leq 1$, $x\neq 1$}.
\end{equation}
In view of equations \eqref{eqhg} and \eqref{eqf1}, a computation gives
 \begin{equation*}
\begin{split}
h(z)*h_{1}(z)&=\frac{1+a}{2}h_{1}(z)+\frac{1-a}{4}\int _0^z \frac{h_{1}(t)-h_{1}(-t)}{t}\,dt\\
&=\frac{1+a}{2} \left [\frac{1}{\left(1+e^{i \theta }\right)}\,\frac{z}{1-z}+\frac{e^{i \theta } }{\left(1+e^{i \theta }\right)^2}\log \left(\frac{1+e^{i \theta } z}{1-z}\right)\right]+\frac{(1-a)e^{i \theta }}{4 \left(1+e^{i \theta }\right)^2}\bigg [ \text{Li}_2(z) \\
&\qquad- \text{Li}_2(-z) + \text{Li}_2(e^{i \theta } z)- \text{Li}_2(-e^{i \theta } z)+ (1+e^{-i \theta })\log \left(\frac{1+z}{1-z}\right)\bigg ]
\end{split}
\end{equation*}
and
\begin{equation}
\begin{split}
g(z)*g_{1}(z)&=\frac{1+a}{2}g_{1}(z)-\frac{1-a}{4}\int_0^z \frac{g_{1}(t)-g_{1}(-t)}{t}\,dt\nonumber\\
&=\frac{1+a}{2} \left [\frac{e^{i \theta }}{\left(1+e^{i \theta }\right)}\,\frac{z}{1-z}-\frac{e^{i \theta } }{\left(1+e^{i \theta }\right)^2}\log \left(\frac{1+e^{i \theta } z}{1-z}\right)\right ]+\frac{(1-a)e^{i \theta }}{4\left(1+e^{i \theta }\right)^2} \bigg [\text{Li}_2(z)~ \nonumber\\
&\qquad-\text{Li}_2(-z)+\text{Li}_2(e^{i \theta } z)-\text{Li}_2(-e^{i \theta } z)-(1+e^{i \theta }) \log \left(\frac{1+z}{1-z}\right)\bigg ],
\end{split}
\end{equation}
where $\text{Li}_2(z)$ denotes the dilogarithm function defined by $\text{Li}_2(z)=\sum _{k=1}^{\infty } \frac{z^k}{k^2}$ for $|z|\leq1$.

Now the dilatation of $f*f_{1}$ is given by
\begin{equation*}
\begin{split}
\widetilde{\omega}_{1}(z)=\frac{(g*g_{1})'(z)}{(h*h_{1})'(z)}=\frac{\frac{1+a}{2}zg'_{1}(z)
-\frac{1-a}{4}\left[g_{1}(z)-g_{1}(-z)\right]}
{\frac{1+a}{2}zh'_{1}(z)+\frac{1-a}{4}\left[h_{1}(z)-h_{1}(-z)\right]}.
\end{split}
\end{equation*}

By Theorem~\ref{ThmA} with $\gamma _1=0=\gamma _2$, we obtain that $f_{1}*f$ is convex in the horizontal direction provided $f_{1}*f$ is locally univalent in $\ID$. Thus, it suffices to show that $|\widetilde{\omega}_{1}(z)|<1$ for $z\in \ID$. In view of the last expression,
$\left|\widetilde{\omega}_{1}(z)\right|<1$ is equivalent to
\begin{equation*}
\left|-\frac{1-a}{2(1+a)}\,\frac{g_{1}(z)-g_{1}(-z)}{z^2h'_{1}(z)}+e^{i\theta}\right|^2|z|^2<
\left|\frac{1-a}{2(1+a)}\,\frac{h_{1}(z)-h_{1}(-z)}{zh'_{1}(z)}+1\right|^2
\end{equation*}
and thus, it suffices to show
\begin{equation*}
\left|-\frac{1-a}{2(1+a)}\,\frac{g_{1}(z)-g_{1}(-z)}{z^2h'_{1}(z)}+e^{i\theta}\right|^2<
\left|\frac{1-a}{2(1+a)}\,\frac{h_{1}(z)-h_{1}(-z)}{zh'_{1}(z)}+1\right|^2,
\end{equation*}
or equivalently,
\begin{equation}\label{eqf3}
B(z):=\left|\frac{1-a}{2(1+a)}\,\frac{g_{1}(z)-g_{1}(-z)}{z^2h'_{1}(z)}\right|^2
-\left|\frac{1-a}{2(1+a)}\,\frac{h_{1}(z)-h_{1}(-z)}{zh'_{1}(z)}\right|^2 <\RE\left(\frac{1-a}{1+a} J(z) \right)
\end{equation}
where
\begin{equation*}
J(z)=\frac{h_{1}(z)-h_{1}(-z)}{zh'_{1}(z)}+\frac{e^{-i\theta}(g_{1}(z)-g_{1}(-z))}{z^2h'_{1}(z)}.
\end{equation*}
Because $h_{1}(0)=0=h_{1}'(0)-1$, $g_{1}(0)=0=g_{1}'(0)$ and $h_{1}'(z)\neq 0$ on $\ID$, the function $J(z)$ is clearly analytic on $\mathbb{D}$.
Since $f_1$ is convex, it follows from Lemma~\ref{lemC} that
\begin{equation*}
\left|\frac{g_{1}(z)-g_{1}(-z)}{h_{1}(z)-h_{1}(-z)}\right|<1.
\end{equation*}
Also, since
\begin{equation*}
\lim_{z\to 0}\frac{g_{1}(z)-g_{1}(-z)}{h_{1}(z)-h_{1}(-z)}=\lim_{z\to 0}\frac{g'_{1}(z)+g'_{1}(-z)}{h'_{1}(z)+h'_{1}(-z)}=0,
\end{equation*}
by Schwarz's Lemma, we  conclude that
$$\left|\frac{g_{1}(z)-g_{1}(-z)}{h_{1}(z)-h_{1}(-z)}\right|<|z|
$$
which clearly implies $B(z)<0$ for $z\in \ID$, where $B(z)$ is given by \eqref{eqf3}.
Thus,  by \eqref{eqf3}, the proof is complete if we show that $\RE\,\{J(z)\}>0$ in $\ID$. In order to do this, by \eqref{eqf1}, we need to simplify
the expression for $J(z)$ as
$$J(z) =\frac{(1+e^{i\theta}z)(1-z)}{(1+e^{i\theta})z}\left [2-\frac{(1-z)(1-e^{i\theta}z)}{(1+e^{i\theta})z}
\left \{\log\left(\frac{1+e^{i\theta}z}{1-z}\right)-\log\left(\frac{1-e^{i\theta}z}{1+z}\right)\right\}\right].
$$
By Lemma~\ref{lemD}, we need to check that $\lim_{z\to e^{it}}J(z)$ exists for all $t\in \mathbb{R}$ (where possibly $J(e^{it})=\infty$). This is clearly the case for all $t\in \mathbb{R}\backslash \{0,\pi-\theta,\pi,2\pi-\theta\}$. For the remaining values of $t$, we obtain the following limits:
$$\lim_{z\to1}J(z)=0, \quad \lim_{z\to -1}J(z)=\left\{ \begin{array}{rl}
         0 & \quad\rm{if}~ \theta=0  \\
     \infty& \quad\rm{if}~ \theta\neq 0
                          \end{array} ,
                          \right.
\quad \lim_{z\to e^{i(\pi-\theta)}}J(z)=0,
$$
and
$$\lim_{z\to e^{i(2\pi-\theta)}}J(z)=4i\tan\frac{\theta}{2}.
$$
Consequently, it suffices to show that $\RE\,\{J(e^{it})\}\geq 0$ for all $t\in \mathbb{R}\backslash \{0,\pi-\theta,\pi,2\pi-\theta\}$. We have
\begin{equation*}
\begin{split}
J(e^{it})&=\frac{(1+e^{i(\theta+t)})(1-e^{it})}{(1+e^{i\theta})e^{it}}\left[ 2-\frac{(1-e^{it})(1-e^{i(\theta+t)})}{(1+e^{i\theta})e^{it}}
\left\{\log\left(\frac{1+e^{i(\theta+t)}}{1-e^{it}}\right)\right . \right . \\
&\hspace{2cm} \left .\left .-\log\left(\frac{1-e^{i(\theta+t)}}{1+e^{it}}\right)\right\}\right]\\
&=\frac{-2i\sin\frac{t}{2}\cos\frac{\theta+t}{2}}{\cos\frac{\theta}{2}}
\left [2+\frac{2\sin\frac{t}{2}\sin\frac{\theta+t}{2}}{\cos\frac{\theta}{2}}\left \{\log\left(\frac{1+e^{i(\theta+t)}}{1-e^{it}}\right)
-\log\left(\frac{1-e^{i(\theta+t)}}{1+e^{it}}\right)\right\}\right ]
\end{split}
\end{equation*}
and hence,
\begin{equation*}
\begin{split}
\RE\,\{J(e^{it})\}
&=\frac{2\sin^{2}\frac{t}{2}\sin(\theta+t)}{\cos^{2}\frac{\theta}{2}}
\left [\arg\left(\frac{1+e^{i(\theta+t)}}{1-e^{it}}\right)
-\arg\left(\frac{1-e^{i(\theta+t)}}{1+e^{it}}\right)\right ].
\end{split}
\end{equation*}
We now introduce
$$A=\arg\left(\frac{1+e^{i(\theta+t)}}{1-e^{it}}\right) ~\mbox{ and }~B=\arg\left(\frac{1-e^{i(\theta+t)}}{1+e^{it}}\right)
$$
so that for $0\leq\theta<\pi$, we have
$$ A=\left\{ \begin{array}{rl}
        \ds \frac{\theta+\pi}{2}  & \quad\rm{if}~ t\in(0,\pi-\theta)  \\
        \ds \frac{\theta-\pi}{2}   & \quad\rm{if}~ t\in(\pi-\theta,2\pi)
                          \end{array}, \right.
~\mbox{ and }~
B= \left\{ \begin{array}{rl}
        \ds \frac{\theta-\pi}{2}  & \quad\rm{if}~ t\in(0,\pi)\cup(2\pi-\theta,2\pi)  \\
        \ds \frac{\theta+\pi}{2}  & \quad\rm{if}~ t\in(\pi,2\pi-\theta).
                          \end{array} \right.
$$
Then
\begin{equation*}
\begin{split}
A-B&=
\left\{ \begin{aligned}
        \pi  & \quad\rm{if}~ t\in(0,\pi-\theta)  \\
      -\pi  & \quad\rm{if}~ t\in(\pi,2\pi-\theta)\\
         0  & \quad\rm{if}~ t\in(\pi-\theta,\pi)\cup(2\pi-\theta,2\pi).
                          \end{aligned} \right.
\end{split}
\end{equation*}

Next, we consider $\sin(\theta+t)$. It is non-negative when $t\in(0,\pi-\theta)\cup(2\pi-\theta,2\pi)$, and negative when $t\in(\pi-\theta,2\pi-\theta)$. This observation shows that $\RE\,\{J(e^{it})\}\geq 0$ for all cases.

The general result for $\theta\in(-\pi,\pi)$ is similar to the above discussion for the case $\theta\in[0,\pi)$. This completes the proof of the assertion $|\widetilde{\omega}_{1}(z)|<1$ in $\ID$.
\end{proof}

The images of the unit disk $\mathbb{D}$ under $f*f_{1}$ for $\theta=\pi/6$ and $a=-0.5,0,0.5,0.8$ are shown in Figure~\ref{fwa}(a)-(d).
The images of $\mathbb{D}$ under $f*f_{1}$ for $a=0.5$ and $\theta=0, \pi/6, \pi/3, \pi/2$ are shown in Figure~\ref{fwn}(a)-(d).
We plot these images as equally spaced concentric radial segments.


\begin{figure}[!h]
\centering
\subfigure[$a=-0.5$]
{\begin{minipage}[b]{0.45\textwidth}
\includegraphics[height=2.4in,width=2.4in,keepaspectratio]{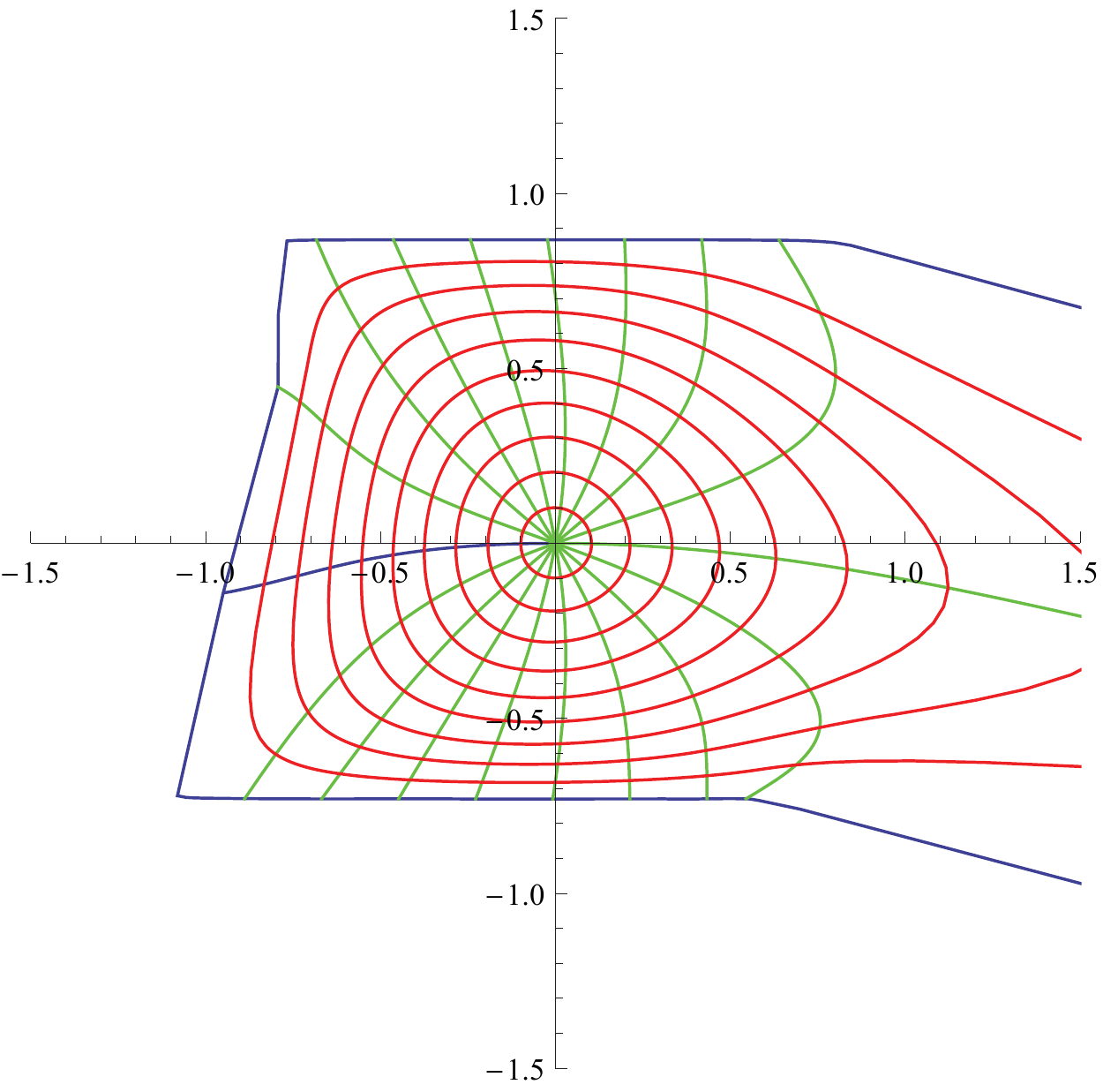}
\end{minipage}}
\subfigure[$a=0$]
{\begin{minipage}[b]{0.45\textwidth}
\includegraphics[height=2.4in,width=2.4in,keepaspectratio]{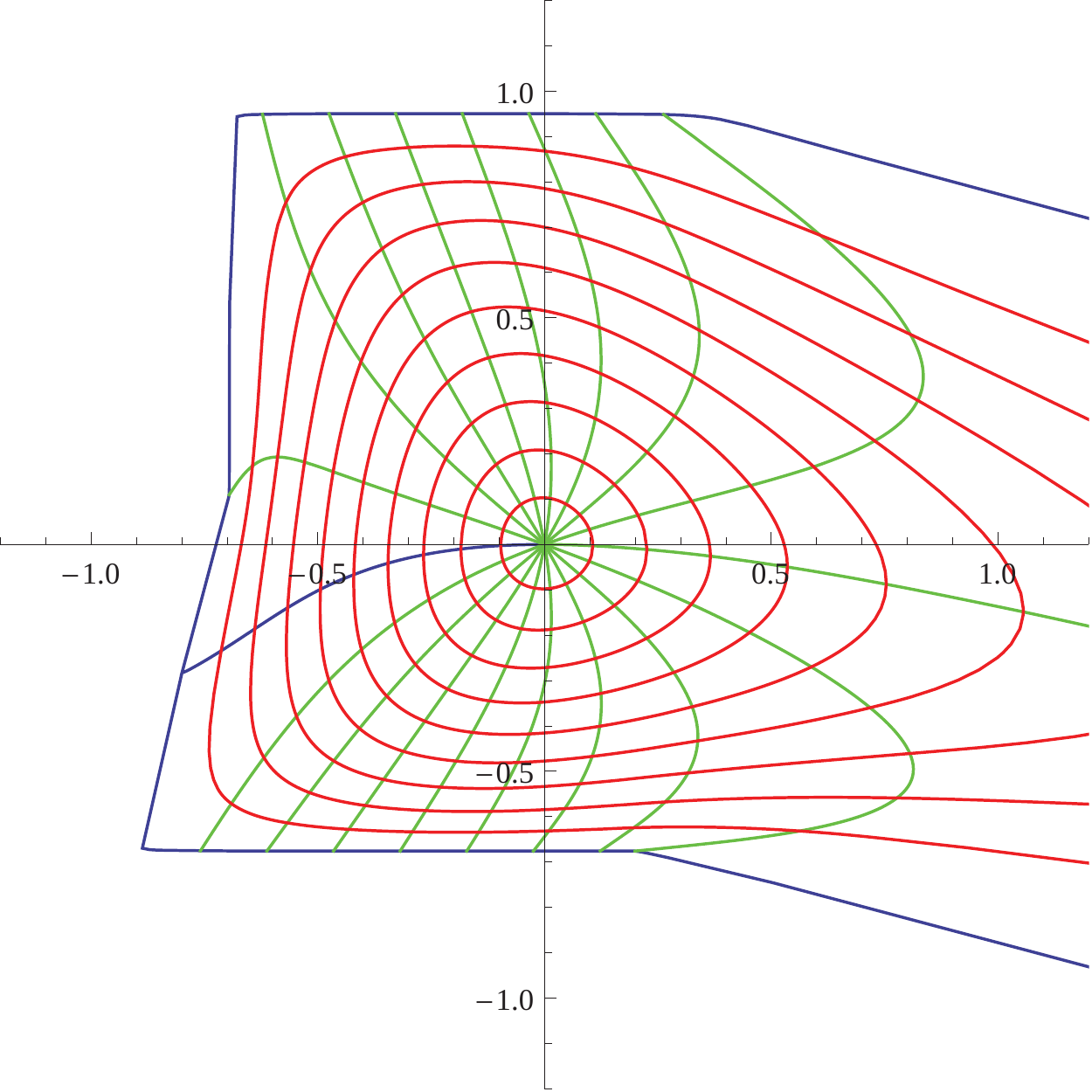}
\end{minipage}}
\subfigure[$a=0.5$]
{\begin{minipage}[b]{0.45\textwidth}
\includegraphics[height=2.4in,width=2.4in,keepaspectratio]{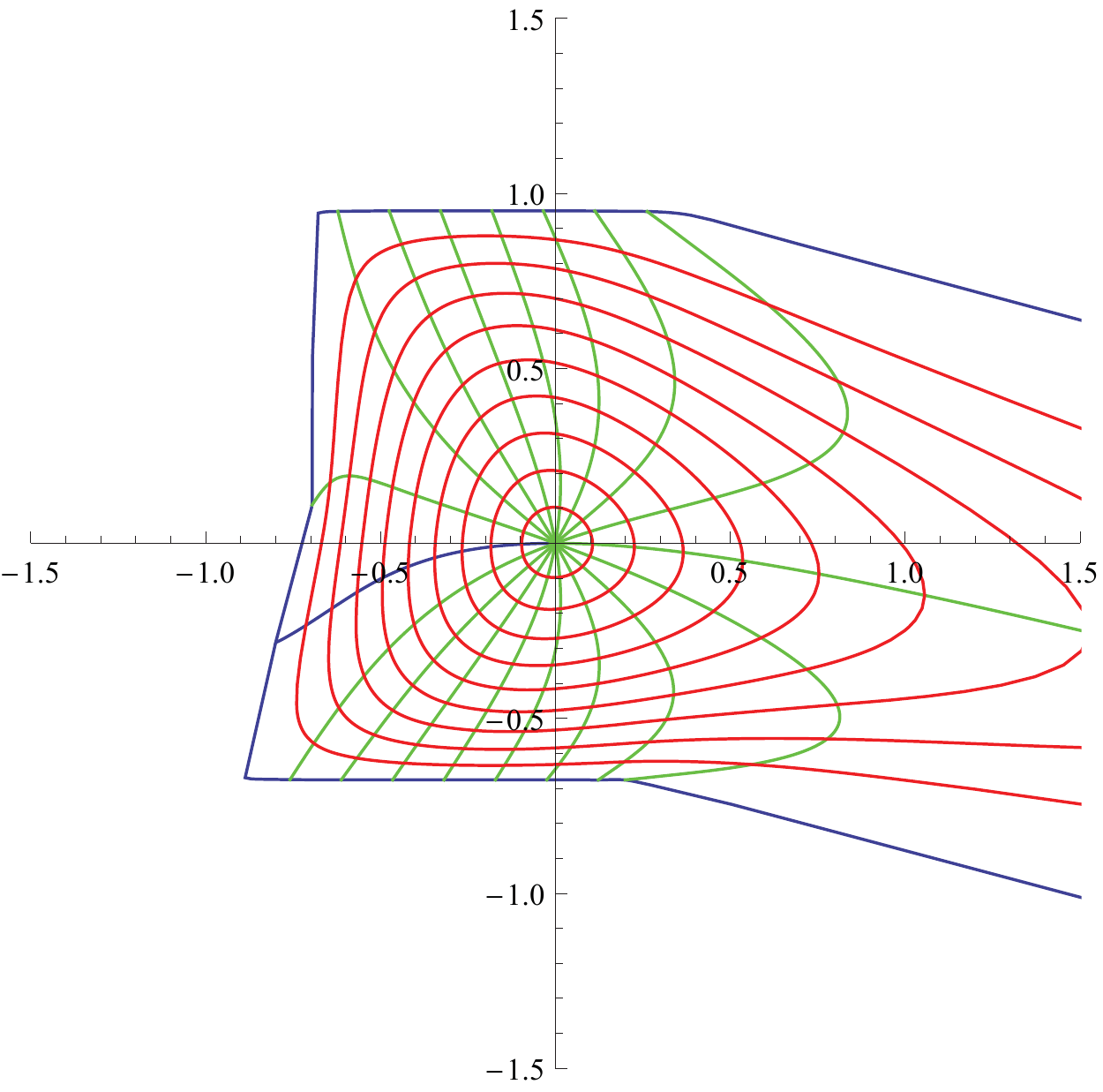}
\end{minipage}}
\subfigure[$a=0.8$]
{\begin{minipage}[b]{0.45\textwidth}
\includegraphics[height=2.4in,width=2.4in,keepaspectratio]{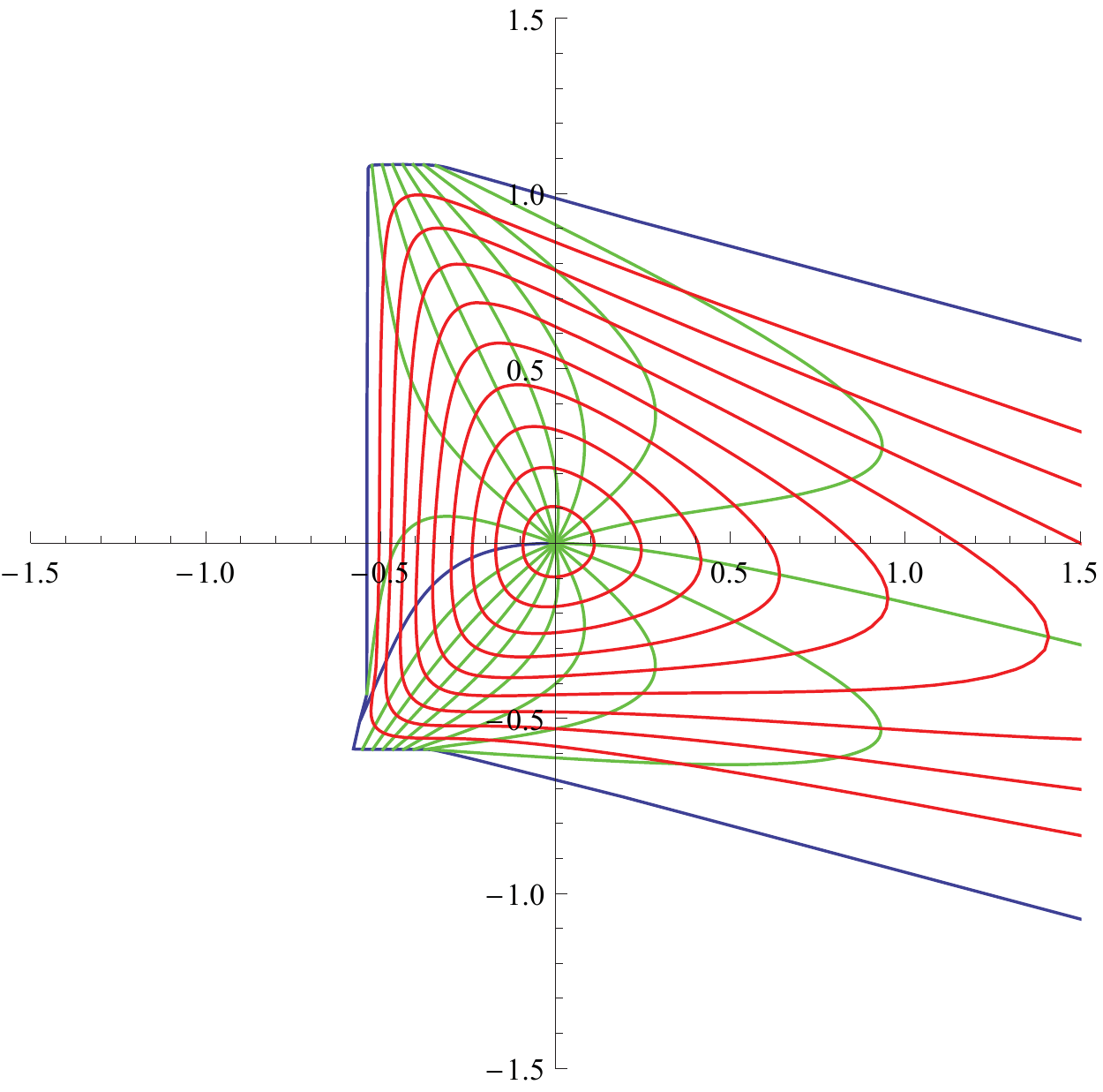}
\end{minipage}}
\caption{Images of $\mathbb{D}$ under $f*f_{1}$ for $\theta=\pi/6$ and for certain values of $a$. \label{fwa}}
\end{figure}

\begin{figure}[!h]
\centering
\subfigure[$\theta=0$]
{\begin{minipage}[b]{0.45\textwidth}
\includegraphics[height=2.4in,width=2.4in,keepaspectratio]{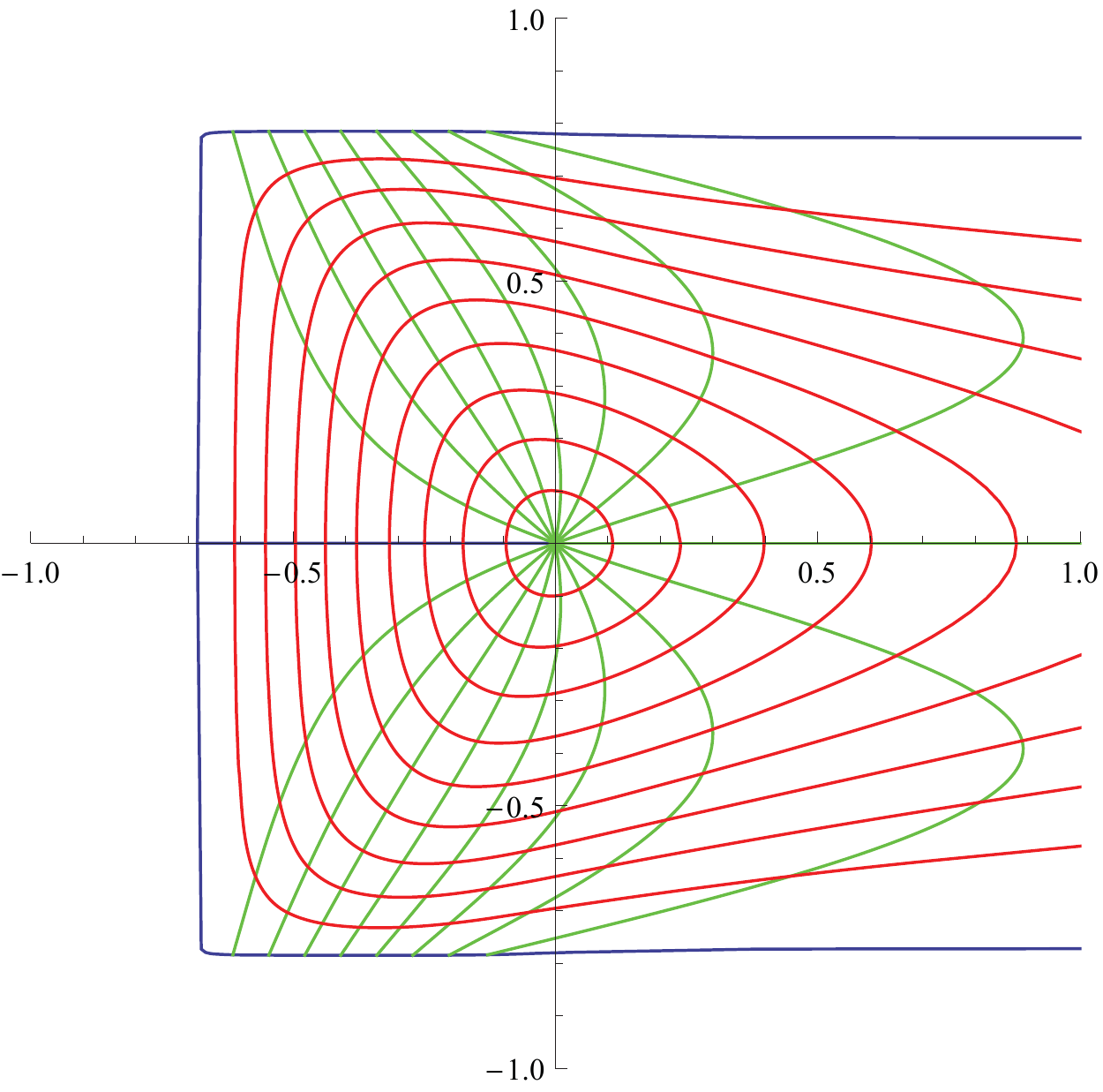}
\end{minipage}}
\subfigure[$\theta=\pi/6$]
{\begin{minipage}[b]{0.45\textwidth}
\includegraphics[height=2.4in,width=2.4in,keepaspectratio]{fa05.pdf}
\end{minipage}}
\subfigure[$\theta=\pi/3$]
{\begin{minipage}[b]{0.45\textwidth}
\includegraphics[height=2.4in,width=2.4in,keepaspectratio]{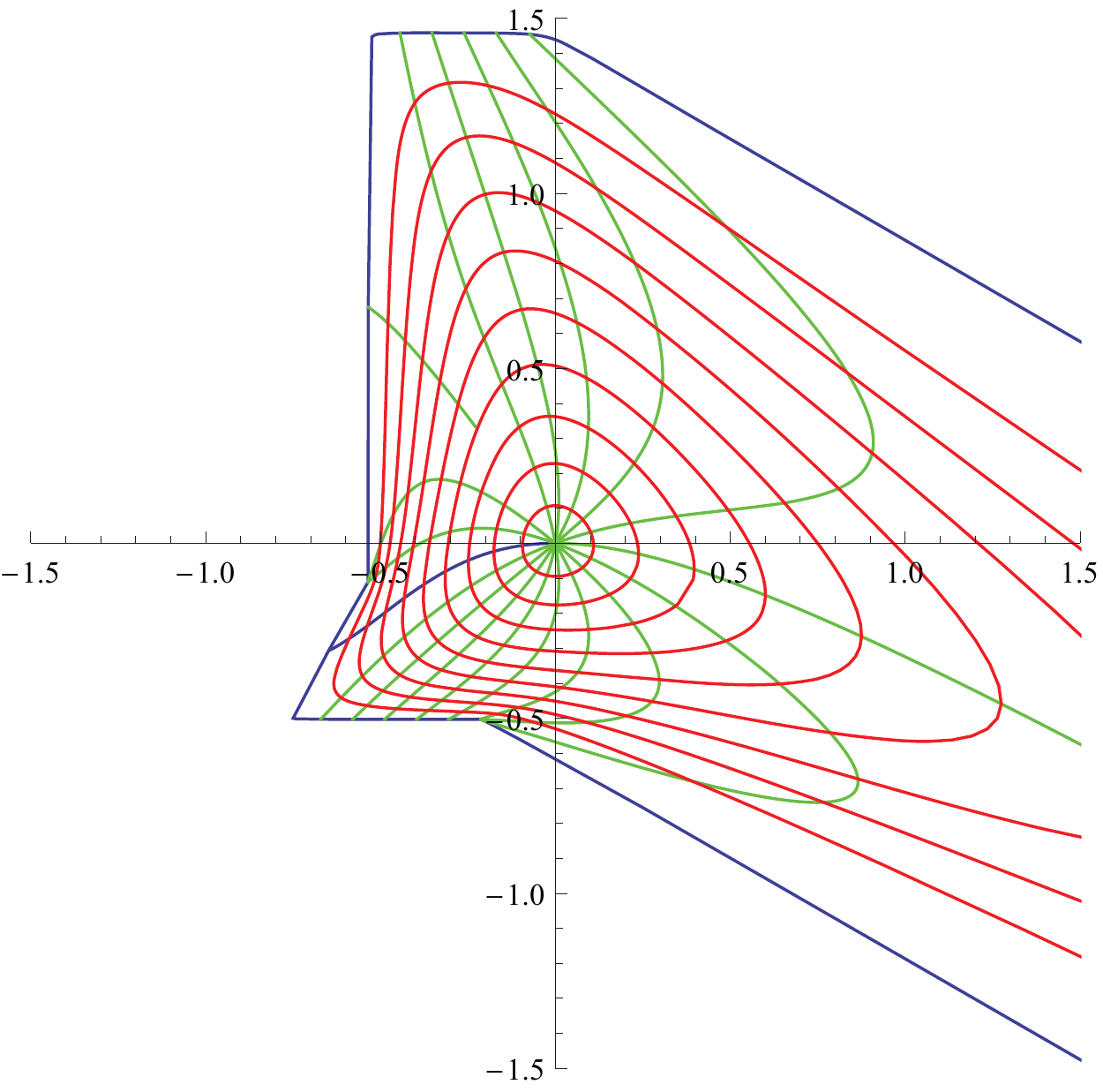}
\end{minipage}}
\subfigure[$\theta=\pi/2$]
{\begin{minipage}[b]{0.45\textwidth}
\includegraphics[height=2.4in,width=2.4in,keepaspectratio]{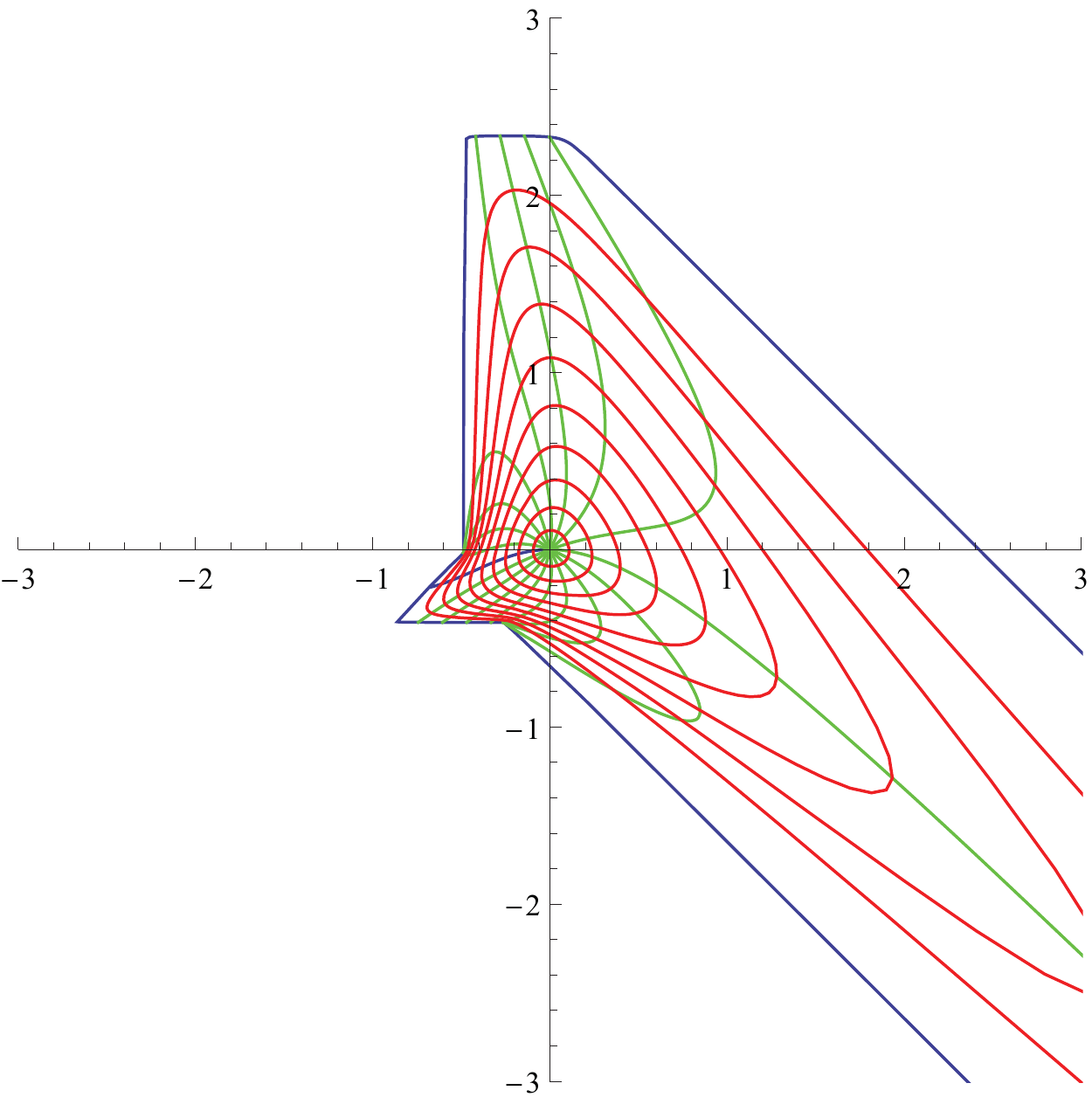}
\end{minipage}}
\caption{Images of $\mathbb{D}$ under $f*f_{1}$ for $a=0.5$ and for certain values $\theta$. }\label{fwn}
\end{figure}
\section{The Dilatations of $f*f_{n}$}
Now we compute the formulas for the dilatations of $f*f_{n}$, where
 $f=h+\overline{g}\in{\mathcal S}(H_{0})$ with $h+g=(1+a)z/(1-z)$ and the dilatation $\omega(z)=(z+a)/(1+a z)$,
where $-1<a<1$, and
$f_n\in{\mathcal S^0}(H_{0})$ with the  dilatation $\omega_{n}(z)=e^{i\theta}z^{n}~(\theta\in\mathbb{R},n\in\mathbb{N})$.
First, we begin by computing the representation of $f_{n}$ when $\theta=\pi$.  Note that in this case
$$h'_{n}(z)+g'_{n}(z)=\frac{1}{(1-z)^2}\quad{\rm and}\quad g'_{n}(z)=-z^nh'_{n}(z)
$$
and thus,
$$h'_{n}(z)=\frac{1}{(1-z^n)(1-z)^2}.
$$
In order to compute $h_n(z)$, we may rewrite it as
\begin{equation*}
\begin{split}
h'_{n}(z) 
&=\frac{n^2-1}{12 n}\,\frac{1}{1-z}+\frac{n-1}{2n}\,\frac{1}{(1-z)^2}+\frac{1}{n}\,\frac{1}{(1-z)^3}
+\frac{1}{n}\sum_{k=1}^{n-1}\frac{1}{(1-e^{\frac{2k\pi}{n}i})^2}\frac{1}{(1-ze^{-\frac{2k\pi}{n}i})}.
\end{split}
\end{equation*}
By integrating the previous expression we arrive at
\begin{equation}\label{eqhgn}
\begin{split}
h_{n}(z)&=\frac{n-1}{2 n}\,\frac{z}{1-z}+\frac{1}{2n}\,\frac{z (2-z)}{ (1-z)^2}-\frac{n^2-1}{12 n}\log (1-z)+\frac{1}{4 n}\sum _{k=1}^{n-1} \csc ^2\frac{\pi  k}{n} \log \left(1-z e^{-\frac{ 2k \pi  }{n}i}\right),\\
g_{n}(z)&=\frac{n+1}{2 n}\,\frac{z}{ 1-z}-\frac{1}{2n}\,\frac{z (2-z)}{ (1-z)^2}+\frac{n^2-1}{12 n}\log (1-z)-\frac{1}{4 n}\sum _{k=1}^{n-1} \csc ^2\frac{\pi  k}{n} \log \left(1-z e^{-\frac{ 2k \pi  }{n}i}\right).
\end{split}
\end{equation}
From \eqref{eqhg} and \eqref{eqhgC}, we obtain  $f*f_{n}$ from
 \begin{equation}\label{eqCv}
\begin{split}
h(z)*h_{n}(z)&=\frac{1-a}{4}\int\frac{h_{n}(z)-h_{n}(-z)}{z}dz+\frac{1+a}{2}h_{n}(z),\\
g(z)*g_{n}(z)&=-\frac{1-a}{4}\int\frac{g_{n}(z)-g_{n}(-z)}{z}dz+\frac{1+a}{2}g_{n}(z),
\end{split}
\end{equation}
and  the dilatation $\widetilde{\omega}_{n}$ of $f*f_{n}$ is then given by
\begin{equation}\label{eqwn}
\begin{split}
\widetilde{\omega}_{n}(z)=\frac{(g*g_{n})'(z)}{(h*h_{n})'(z)}=\frac{-\frac{1-a}{4}\left(g_{n}(z)-g_{n}(-z)\right)+\frac{1+a}{2}zg'_{n}(z)}
{\frac{1-a}{4}\left(h_{n}(z)-h_{n}(-z)\right)+\frac{1+a}{2}zh'_{n}(z)}.
\end{split}
\end{equation}
Table~1 presents values of $|\widetilde{\omega}_{n}(z)|$ for certain choices of $n,a$ and $z$ with the help of \emph{Mathematica}. 
From Table~1, we observe for some $n\geq 2$ that $\left|\widetilde{\omega}_{n}(z)\right|>1$ for certain values of $a$ and $z$.

\begin{table}[!h]
\begin{tabular}{|c|c|c|c|}
\hline
\hline
$n$ & $a$ & $z$ &\textbf{$\left|\widetilde{\omega}_{n}(z)\right| $}\\
\hline
2   &0.5  & $0.99 \exp \left(\frac{ \pi }{ 3} i\right)$  &1.06019  \\
\hline
3   &0.5  & $0.99 \exp \left(\frac{3 \pi }{4} i\right)$ &1.28884  \\
\hline
4   &-0.5 & $0.99 \exp \left(\frac{\pi } {8}i  \right)$ &1.07326  \\
\hline
5   &-0.5 & $0.99 \exp \left(\frac{\pi }{10}i \right)$ &1.04422  \\
\hline
6   &-0.4 & $0.99 \exp \left(\frac{\pi }{11}i \right)$ &1.03038  \\
\hline
7   &0.5  & $0.99 \exp \left(\frac{\pi }{3}i  \right)$  &1.04396  \\
\hline
8   &0.5  & $0.99 \exp \left(\frac{\pi }{3}i  \right)$  &1.02052  \\
\hline
9   &0.5   & $0.99 \exp \left(\frac{\pi }{2}i  \right)$  &1.12641  \\
\hline
10 &0.3   & $0.99 \exp \left(\frac{\pi }{4}i \right)$  &1.05563  \\
\hline
11 &-0.7  & $0.99 \exp \left(\frac{\pi }{5}i \right)$  &1.32055  \\
\hline
12 &0     & $0.99 \exp \left(\frac{\pi }{5}i \right)$  &1.09197  \\
\hline
13 &0     & $0.99 \exp \left(\frac{\pi }{5}i \right)$  &1.00698  \\
\hline
14 &-0.4 & $0.99 \exp \left(\frac{\pi }{6}i \right)$  &1.20222  \\
\hline
15 &-0.2 & $0.99 \exp \left(\frac{\pi }{6}i \right)$  &1.04876  \\
\hline
\end{tabular}
\vskip.10in
\caption{Values of $\left|\widetilde{\omega}_{n}(z)\right|$ for certain values $a$ and $z$.}
\end{table}

Next, we assume that $\theta\neq\pi$ and consider
$$h_{n}'(z)+g_{n}'(z)=\frac{1}{(1-z)^2}\quad{\rm and}\quad\omega_{n}(z)=\frac{ g'_{n}(z)}{h'_{n}(z)}=e^{i\theta}z^{n},
$$
so that
\begin{equation*}
\begin{split}
h_{n}'(z)&=\frac{1}{(1+e^{i\theta}z^{n})(1-z)^2}\\
&=\frac{ne^{i\theta}}{(1+e^{i\theta})^2}\,\frac{1}{1-z}+\frac{1}{1+e^{i\theta}}\,\frac{1}{(1-z)^2}\\
&\qquad-\frac{1}{n}\sum_{k=0}^{n-1}
\frac{1}{\left(1-e^{i\frac{(2k+1)\pi-\theta}{n}}\right)^2}\frac{1}{1-z e^{-i\frac{(2k+1)\pi-\theta}{n}}}.
\end{split}
\end{equation*}
Integration gives
\begin{equation}\label{eqhn1}
\begin{split}
h_{n}(z)&=-\frac{ne^{i\theta}}{(1+e^{i\theta})^2}\log(1-z)+\frac{1}{1+e^{i\theta}}\, \frac{z}{1-z}\\
&\quad+\frac{1}{4n}\sum_{k=0}^{n-1}\csc^2\frac{(2k+1)\pi-\theta}{2n}\log\left(1-z e^{-i\frac{(2k+1)\pi-\theta}{n}}\right),
\end{split}
\end{equation}
and thus
\begin{equation}\label{eqgn1}
\begin{split}
g_{n}(z)&=\frac{z}{1-z}-h_{n}(z)\\
&=\frac{ne^{i\theta}}{(1+e^{i\theta})^2}\log(1-z)+\frac{e^{i\theta}}{1+e^{i\theta}}\, \frac{z}{1-z}\\
&\quad-\frac{1}{4n}\sum_{k=0}^{n-1}\csc^2\frac{(2k+1)\pi-\theta}{2n}\log\left(1-z e^{-i\frac{(2k+1)\pi-\theta}{n}}\right).
\end{split}
\end{equation}
Substituting the expressions \eqref{eqhn1} and \eqref{eqgn1} into \eqref{eqwn},  we get Table~2.
\begin{table}[!h]
\begin{tabular}{|c|c|c|c|c|}
\hline
$n$ & $a$ & $\theta$ & $z$ &$\left|\widetilde{\omega}_{n}(z)\right| $\\
\hline
2   &0.5  & $\frac{ \pi }{8}$ & $0.99 \exp \left(\frac{ \pi }{2} i\right)$  &1.16334  \\
\hline
3   &0.5  & $\frac{ \pi }{12}$ & $0.99 \exp \left(\frac{ \pi }{2} i\right)$ &1.09124  \\
\hline
4  &0.5  & $\frac{ \pi }{3 }$ & $0.99 \exp \left(\frac{ \pi }{3}i \right)$  &1.05616  \\
\hline
5  &0.8  & $\frac{ \pi }{6 }$ & $0.99 \exp \left(\frac{2\pi }{3}i \right)$  &1.06377  \\
\hline
6  &0.7  & $\frac{ \pi }{3 }$ & $0.99 \exp \left(\frac{ \pi }{2}i \right)$  &1.09271  \\
\hline
7  &0.7  & $\frac{ \pi }{6 }$ & $0.99 \exp \left(\frac{ \pi }{2}i \right)$  &1.01364  \\
\hline
8  &0.6  & $-\frac{\pi }{3 }$ & $0.99 \exp \left(\frac{ \pi }{2}i   \right)$  &1.04091  \\
\hline
9  &0.7  & $\frac{\pi }{2 }$ & $0.99 \exp \left(-\frac{ 7\pi }{8}i \right)$  &1.20496  \\
\hline
10&0.7  & $-\frac{\pi }{2 }$ & $0.99 \exp \left(-\frac{ 7\pi }{8}i \right)$  &1.97405  \\
\hline
11&0.4  & $\frac{\pi }{2 }$ & $0.99 \exp \left(-\frac{ 7\pi }{8}i \right)$  &1.42585  \\
\hline
12&0   & $\frac{\pi }{2 }$ & $0.99 \exp \left(\frac{ 7\pi }{8}i \right)$  &1.09957  \\
\hline
13&0.9 & $-\frac{\pi }{16}$ & $0.99 \exp \left(\frac{ 7\pi }{8}i \right)$  &1.01078  \\
\hline
14&0.9 & $-\frac{3\pi }{4}$ & $0.99 \exp \left(-\frac{ 7\pi }{8}i \right)$  &1.08478  \\
\hline
15&0.9 & $-\frac{\pi }{4}$ & $0.99 \exp \left(\frac{ 7\pi }{8}i \right)$  &1.00032  \\
\hline
\end{tabular}
\vskip.10in
\caption{Values of $\left|\widetilde{\omega}_{n}(z)\right|$ for certain values $a,\theta$ and $z$.}
\end{table}

From the above discussion and some complicated calculations and experiments, we propose the following.
\begin{problem}
Let $f=h+\overline{g}\in{\mathcal S}(H_{0})$ with $h+g=(1+a)z/(1-z)$ and the dilatation $\omega(z)=(z+a)/(1+a z)$, where $-1<a<1$,
and $f_n =h_{n}+\overline{g_{n}}\in{\mathcal S^0}(H_{0})$ with dilatations $\omega_{n}(z)=e^{i\theta}z^n~(\theta\in\mathbb{R},n\in\mathbb{N})$.
We conjecture that $f*f_{n}$ is not locally univalent for $n\geq 2$. In this situation, it is natural to ask for
the radius of univalency of $f*f_{n}$.
\end{problem}



\subsection*{Acknowledgements}
The authors thank the referee for pointing out the error occurred in Problem 1 in page 2, and the comments which essentially improved the article.
The research of the first author was supported by the Project Education Fund of Yunnan Province under Grant No. 2015Y456, the
First Bath of Young and Middle-aged Academic Training Object Backbone of Honghe University under Grant No. 2014GG0102. The work of the first author
was completed during his visit to Indian Statistical Institute, Chennai Centre.






\begin{thebibliography}{99}
\bibitem{Muhanna1987} Y. Abu-Muhanna and G. Schober,
\textit{Harmonic mappings onto convex domains},
Can. J. Math. \textbf{39} (1987), 1489--1530.

\bibitem{Boyd2014} Z. Boyd, M. Dorff, M. Nowak, M. Romney and M. Wo{\l}oszkiewicz,
\textit{Univalency of convolutions of harmonic mappings},
Appl. Comput. Math. \textbf{234} (2014), 326--332.

\bibitem{Bshouty2010} D. Bshouty and A. Lyzzaik,
\textit{Problems and conjectures in planar harmonic mappings, In: Proc.
ICM2010 Satellite Conf. Int. Workshop on Harmonic and Quasiconformal Mappings, IIT Madras,
Aug. 09--17, 2010, ed. by D. Minda, S. Ponnusamy, and N. Shanmugalingam,} J. Analysis \textbf{18}(2010),
69--81.

\bibitem{Clunie} J. Clunie and T. Sheil-Small,
\textit{Harmonic univalent functions},
Ann. Acad. Sci. Fenn. Ser. A.I. Math. \textbf{9} (1984), 3--25.

\bibitem{Dorff1999} M. Dorff,
\textit{Harmonic mappings onto asymmetric vertical strips}, in Computational Methods and Function Theorey 1997,
(N. Papamichael, St. Ruscheweyh and E. B. Staff. eds.), 171--175, World Sci. Publishing, River Edge, NJ, 1999.

\bibitem{Dorff2001} M. Dorff,
\textit{Convolutions of planar harmonic convex mappings},
Comp. Var. Theorey Appl. \textbf{45}(3)(2001), 263--271.

\bibitem{Dorff2012B} M. Dorff,
\textit{Anamorphosis, mapping problems, and harmonic univalent functions,
chapter in Explorations in Complex Analysis}, Mathematical Association of America, 2012.

\bibitem{Dorff2012} M. Dorff, M. Nowak and M. Wo{\l}oszkiewicz,
\textit{Convolutions of harmonic convex mappings},
Complex Var. Elliptic Equ. \textbf{57}(5)(2012), 489--503.

\bibitem{Duren} P. Duren,
\textit{Harmonic mappings in the plane}, Cambridge Univ. Press, 2004.

\bibitem{Hengartner1987} W. Hengartner and G. Schober,
\textit{Univalent harmonic functions},
Trans. Amer. Math. Soc. \textbf{299} (1987), 1--31.

\bibitem{Jiang2015} Y. Jiang, A. Rasila, and Y. Sun,
\textit{A note on convexity of convolutions of harmonic mappings},
Bull. Korean Math. Soc. \textbf{52}(6) (2015), 1925--1935.

%


\bibitem{Lewy1936} H. Lewy,
\textit{On the non-vanishing of the Jacobian in certain one-to-one mappings},
Bull. Amer. Math. Soc. \textbf{42}(1936), 689--692.

\bibitem{Li1}
L. Li and S. Ponnusamy,
\textit{Solution to an open problem on convolutions of harmonic mappings},
Complex Var. Elliptic Equ. \textbf{58}(12)  (2013), 1647--1653.

\bibitem{Li2} L. Li and S. Ponnusamy,
\textit{Convolutions of slanted half-plane harmonic mappings},
Analysis (Munich). \textbf{33}(2)  (2013), 159--176.

\bibitem{Liu1} Z. Liu and  Y. Li,
\textit{Convolutions of generalized harmonic half-plane mappings with harmonic strip mappings}.
Abstr. Appl. Anal. 2014, Article ID 474239.

\bibitem{SaRa2013}  S. Ponnusamy and A. Rasila,
\textit{Planar harmonic and quasiregular mappings},
Topics in Modern  Function Theory (Editors. St. Ruscheweyh and S. Ponnusamy): Chapter in
CMFT, RMS-Lecture Notes Series \textbf{19} (2013), 267--333.

\bibitem{Romney2013} M. Romney,
\textit{A class of univalent convolutions of harmonic mappings}, (2013). All Theses and Dissertations. Paper 4169.

\bibitem{Wang2016} Z. Wang, Z. Liu and Y. Li,
\textit{On convolutions of harmonic univalent mappings convex in the direction of the real axis},
J. Appl. Anal. Comput. \textbf{6}(1) (2016), 145--155.





\end{thebibliography}
\end{document}